\newtheorem{theor}{Theorem}[section]
\newtheorem{lemma}[theor]{Lemma}
\newtheorem{prop}[theor]{Proposition}
\theoremstyle{definition}
\newtheorem{rmk}[theor]{Remark}
\newcommand{\st}{\, | \,}
\newcommand{\ms}{\mathcal{M}}
\DeclareMathOperator{\inter}{\it{i}}
\DeclareMathOperator{\sys}{sys}
\DeclareMathOperator{\arcsinh}{arcsinh}
\DeclareMathOperator{\arccosh}{arccosh}
\DeclareMathOperator{\kiss}{Kiss}
\DeclareMathOperator{\area}{area}
\DeclareMathOperator{\PSL}{PSL}
\newcommand{\Z}{{\mathbb Z}}
\newcommand{\R}{{\mathbb R}}
\newcommand{\Hyp}{{\mathbb H}}
\title[Systoles and kissing numbers of finite area hyperbolic surfaces]{Systoles and kissing numbers\\of finite area hyperbolic surfaces}
\author{Federica Fanoni}
\address[Federica Fanoni]{Department of Mathematics, University of Fribourg, Switzerland}
\email{federica.fanoni@gmail.com}
\author{Hugo Parlier}
\address[Hugo Parlier]{Department of Mathematics, University of Fribourg, Switzerland}
\email{hugo.parlier@unifr.ch}
\date{\today}
\thanks{Research supported by Swiss National Science Foundation grant number PP00P2\_128557.}
\keywords{hyperbolic surfaces, kissing numbers, systoles}
\subjclass[2010]{Primary: 30F10. Secondary: 32G15, 53C22.}
\begin{document}
\begin{abstract}
We study the number and the length of systoles on complete finite area orientable hyperbolic surfaces. In particular, we prove upper bounds on the number of systoles that a surface can have (the so-called kissing number for hyperbolic surfaces). Our main result is a bound which only depends on the topology of the surface and which grows subquadratically in the genus. 
\end{abstract}
\maketitle
\section{Introduction}
In analogy with classical sphere packing problems in $\R^n$, Schmutz Schaller named and studied {\it kissing numbers} for hyperbolic surfaces. This is a particular instance of a more general analogy between the study of $n$-dimensional lattices (and their parameter spaces) and the study of hyperbolic surfaces (and their parameter spaces). Both are natural generalizations of the study of $2$-dimensional flat tori. The natural parameter spaces of these tori are $\Hyp$ and $\Hyp/ \PSL_2(\Z)$; their higher dimensional analogues include on the one hand the spaces of lattices and on the other Teichm\"uller and moduli spaces.

The classical kissing number problem is to bound the number of disjoint open unit balls that can be tangent to a fixed unit ball -- the {\it lattice kissing number} is the same problem but where one asks that the centers of the spheres lie on some lattice. This is in fact an equivalent problem to counting the number of systoles {(up to isotopy)} of the underlying lattice. Another classical topic for flat tori is the study of Hermite constants. This involves finding sharp upper bounds on the length of shortest nontrivial lattice vectors or in other words, bounds on the systole length of the quotient tori. Both of these problems make perfect sense for finite area hyperbolic surfaces and have been studied by a variety of authors including Bavard \cite{Bavard2} and Schmutz Schaller \cite{schmutz1,schmutz3}. 

Schmutz Schaller provided a variety of results on the length and the number of systoles on both closed hyperbolic surfaces and finite area complete. Lower bounds for either of these quantities can be found using arithmetic methods. Buser and Sarnak \cite{BS} were the first to show that there exist families $S_k$ of closed surfaces of genus $g_k$ with $g_k \to \infty$ as $k\to \infty$ whose systole length grows like
$$
\sys(S_k) \geq \frac{4}{3} \log g_k.
$$
Katz, Schaps and Vishne \cite{katzetal} generalized this construction to principal congruence subgroups of arbitrary arithmetic surfaces. Makisumi \cite{makisumi} showed that, in some sense, this is best one can hope for via arithmetic constructions. Schmutz Schaller \cite{schmutz3} found analogous results for kissing numbers: for any $\varepsilon>0$, there is a family of closed surfaces $T_k$ of genus $g_k$ with $g_k \to \infty$ as $k\to \infty$ whose number of systoles grows like
$$
\kiss(T_k) \geq g_k^{\frac{4}{3}-\varepsilon}.
$$
For surfaces with cusps, families reaching these lower bounds (for both quantities) are directly obtainable by considering principal congruence subgroups of $\PSL_2(\Z)$ (see \cite{schmutz4, brooks, BMP}). The number of cusps in these examples grows roughly like $g^{\frac{2}{3}}$. 

Upper bounds for these quantities have also been studied, in particular for closed surfaces. Via an easy area argument, one can obtain an upper bound on systole length of closed surfaces of genus $g$ that grows like $2 \log g$. This complements Buser and Sarnak's lower bound to show that the rough growth is logarithmic but the discrepancy between the $\frac{4}{3}$ and the $2$ remains mysterious. Schmutz Schaller, using a disk packing argument of Fejes T\'oth, proved a very nice upper bound on systole length which is actually sharp for the congruence subgroups of $\PSL_2(\Z)$ (see also \cite{Adams, Bavard1}). We use this result in an essential way and give the exact formulation in the sequel (Theorem \ref{schmutzsys}).

For kissing numbers, the best known upper bounds are results of the second author \cite{parlier}. In particular it is shown that there is a bound which depends only on the genus $g$, and which grows at most subquadratically in function of $g$. Again, there is a discrepancy between the $g^{\frac{4}{3}}$ lower bound and the $g^2$ upper bound (although the latter cannot be sharp). Upper bounds for kissing numbers of non closed finite area complete surfaces (i.e.\ surfaces with cusps) have yet to be approached. Filling this gap is the main goal of our article. 

One of the main consequences of what we obtain is the following.

\begin{theor}\label{thm:intro1}
There exists a universal constant $C>0$ such that for any $S\in \ms_{g,n}$, $g\geq 1$, its kissing number satisfies
$$\kiss(S)\leq C\,(g+n)\frac{g}{\log(g+1)}.$$
\end{theor}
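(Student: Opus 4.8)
The plan is to combine the length bound of Theorem \ref{schmutzsys} with a collar--packing argument. Write $\ell=\sys(S)$ and $N=\kiss(S)$, and recall two elementary facts about systoles: each systole is a simple closed geodesic, and two distinct systoles intersect at most once (otherwise a surgery at a second intersection point would produce a strictly shorter essential curve). By Theorem \ref{schmutzsys} the systole length is bounded by some $L=L(g,n)$ that grows only logarithmically in the complexity of $S$; this logarithmic saving is what will ultimately sit in the denominator. It therefore remains to bound the \emph{number} of simple closed geodesics of length $\ell\le L$.

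First I would attach to each systole $\gamma$ its maximal embedded collar. By the collar lemma this collar has half-width $w(\ell)=\arcsinh(1/\sinh(\ell/2))$ and hence area $2\ell/\sinh(\ell/2)$, a decreasing function of $\ell$. Using $\ell\le L$ one gets a uniform lower bound
\[
\area(\text{collar of }\gamma)\;\ge\;\frac{2L}{\sinh(L/2)}\;\gtrsim\;\frac{\log(g+1)}{g+1},
\]
while the total area of $S$ is $2\pi(2g-2+n)$ by Gauss--Bonnet. If the collars were pairwise disjoint, dividing the total area by this lower bound would already yield a bound of the shape $C\,(g+n)\,\frac{g}{\log(g+1)}$, which is exactly the target. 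So the skeleton of the proof is a packing estimate, and all the real work lies in the two places where this heuristic breaks down.

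The first difficulty is that the collars of \emph{intersecting} systoles overlap. Here I would use that two systoles meet at most once and, crucially, that they cross at an angle bounded away from $0$ (a shallow crossing could be surgered to shorten a systole). Consequently the collars of two crossing systoles overlap only in a small ball around the crossing point, of radius comparable to $w(\ell)$. Trimming from each collar a ball around every crossing point lying on it makes the trimmed collars pairwise disjoint, at the cost of discarding an area proportional to $w(\ell)^2$ per crossing; keeping this loss negligible relative to the collar area requires an a priori bound on the number of systoles crossing a fixed systole, which I would extract from the same collar geometry.

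The second, and in my view the main, obstacle is specific to the cusped case and is precisely what has not been addressed before: the naive packing bound carries the complexity $g+n$ in \emph{both} factors, i.e.\ $(g+n)^2/\log(g+n)$, whereas the theorem asks for $g$ in the second factor and for $\log(g+1)$ in the denominator. The extra area contributed by the $n$ cusps must therefore \emph{not} translate into extra systoles. The plan is to show that systoles are confined to the thick part and cannot accumulate inside the cusp regions: a cusp neighbourhood bounded by a short horocycle contains no systole and forces any systole entering it to be long, so the large area near the cusps is ``wasted'' for the packing and should be discarded before dividing. Quantifying how little of the systole-carrying region the cusps can see --- so that the complexity governing the crucial factor is $g$ rather than $g+n$, and the relevant length scale in the collar bound is the genus logarithm $\log(g+1)$ --- is where I expect the genuine work to lie.
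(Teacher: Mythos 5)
There are genuine gaps here, and two of your foundational claims are false in the cusped setting. First, systoles on a surface with cusps can intersect \emph{twice}: the surgery you invoke at a second intersection point does produce a shorter curve, but that curve may bound a cusp and hence be inessential, so no contradiction arises. Section \ref{sec:topology} of the paper shows that the sharp bound is $\inter(\alpha,\beta)\le 2$, with equality forcing one of the curves to bound two cusps (Proposition \ref{2cusps}), and constructs surfaces in every genus realizing two intersections. Second, and more damaging for your packing scheme, the collar separation for \emph{disjoint} systoles fails exactly when two systoles cobound a pair of pants with a cusp: the distance between two such disjoint systoles of length $\ell$ is $\arccosh\bigl(1+2/\sinh^2(\ell/2)\bigr)$, which is strictly smaller than $2r(\ell)=2\arcsinh\bigl(1/(2\sinh(\ell/4))\bigr)$, so their collars overlap substantially near the cusp and many such systoles can crowd around a single cusp. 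This is why the paper partitions the systoles into three families --- those bounding two cusps ($A$), those cobounding a cusp with another systole ($B$), and the rest ($C$) --- and only $C$ is counted by the collar/covering packing you describe; $A$ is counted by bounding the number of pairs of cusps at the critical distance (or by an Euler characteristic argument), and $B$ by packing disjoint horocyclic arcs around each cusp. Your proposal has no mechanism for $A$ or $B$. (Also, the crossing angle is not bounded away from $0$: Lemma \ref{boundangle} only gives a lower bound decaying like $e^{-\ell/4}$, which must be carried quantitatively through the overlap estimates.)

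The second gap concerns the length bound. Your displayed inequality $2L/\sinh(L/2)\gtrsim \log(g+1)/(g+1)$ requires $e^{L/2}\lesssim g$, i.e.\ $\sys(S)\le 2\log(g)+O(1)$. Theorem \ref{schmutzsys} does not give this: it requires $n\ge 2$, and for small $n$ it only yields roughly $4\log(g)$, hence $e^{L/2}\sim g^2$ and a final bound of order $(g+n)\,g^2/\log(g)$, overshooting the target. The genus-only bound $\sys(S)\le 2\log(g)+K$ is Theorem \ref{sysbound}, a separate main result of the paper proved by a horoball-growth argument in Section \ref{sec:length}, and you neither prove nor cite it. Conversely, Schmutz Schaller's bound is precisely what is needed for the family $A$, where the count is $\frac{n}{2}\lfloor 2\cosh(\ell/4)\rfloor$ and one must know $\cosh(\ell/4)\lesssim (6g-6+3n)/n$ to make that term linear in $g+n$. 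Your final paragraph correctly senses that the cusps are the crux, but the proposed fix (discarding cusp neighbourhoods from the area count) does not address it: the standard cusp regions have total area only $2n$ out of $2\pi(2g-2+n)$, and the target bound in fact retains the factor $g+n$; what produces the factor $g/\log(g+1)$ is the genus-only systole bound together with the separate treatment of $A$ and $B$.
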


We obtain this result as a consequence of a number of results concerning the length and the topological configurations of systoles.

In particular, concerning the length of systoles, we show the following. 
\begin{theor}\label{thm:intro2}
There exists a universal constant $K<8$ such that every $S\in\ms_{g,n}$ ($g\neq 0$) satisfies
$$\sys(S)\leq 2\log(g)+K.$$
\end{theor}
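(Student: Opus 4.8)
The plan is to combine the Gauss--Bonnet area identity with an embedded-disk (disk-packing) estimate; the leading term $2\log g$ comes out immediately, and the only real difficulty is to make the final constant independent of the number of cusps $n$.

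Write $\ell=\sys(S)$ and let $\gamma$ be a systole, which is necessarily a simple closed geodesic. By Gauss--Bonnet, $\area(S)=2\pi(2g-2+n)$. I would pick a point $p_0\in\gamma$ lying as deep as possible in the thick part and consider the metric disk $D=B(p_0,\ell/2)$. The first step is to check that $D$ is isometrically embedded: any geodesic loop based at $p_0$ of length $<\ell$ would freely homotope either to a closed geodesic of length $<\ell$ (impossible, since $\ell$ is the systole) or into a cusp. The latter is excluded as long as $p_0$ is far enough from the standard (length-$2$ horocycle) cusp neighbourhoods, which every closed geodesic — and in particular $\gamma$ — avoids. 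Granting embeddedness, $\area(D)=2\pi(\cosh(\ell/2)-1)\le\area(S)$ yields
$$\cosh(\ell/2)\le 2g-1+n,\qquad \ell\le 2\arccosh(2g-1+n)\le 2\log\bigl(2(2g-1+n)\bigr).$$

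This already proves the theorem whenever $n$ is comparable to $g$: for $n=0$ one gets $\ell\le 2\log(4g-2)\le 2\log g+4\log 2$ with $4\log 2\approx 2.77<8$, and the same holds for $n\le g$. The entire problem is therefore the regime $n\gg g$, where the crude area bound is useless and where, moreover, a systole hugging the cusps may prevent the full radius-$\ell/2$ disk from embedding at all. To remove the dependence on $n$ I would localise the systole inside a subsurface whose area is governed by the genus alone. The case $g=1$ is the model: a systole on a genus-$1$ surface is non-peripheral in an essential one-holed torus $\Sigma\subset S$ with geodesic boundary, and $\area(\Sigma)=2\pi$ by Gauss--Bonnet no matter how many cusps lie outside $\Sigma$; running the disk estimate \emph{inside} $\Sigma$ then caps $\ell$ by the universal constant $2\arccosh 2\approx 2.63$. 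In general one wants an essential subsurface $\Sigma$ of genus $\le g$ carrying $\gamma$ as a non-peripheral curve and with boundary-component count bounded in terms of $g$, so that $\area(\Sigma)\le 2\pi(2g-2+b)$ with $b$ depending on $g$ only, and then re-run the embedded-disk argument inside $\Sigma$.

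The hard part is to control two things uniformly in $n$. First, one must bound the number of boundary geodesics of $\Sigma$ and deal with their lengths: these are themselves closed geodesics, hence of length $\ge\ell$, so a long systole forces them to be long, which against the fixed area of $\Sigma$ tends to pinch the interior curves — this tension is exactly what should force $\ell$ to be bounded, but converting it into a clean inequality is delicate. Second, the disk $D$ centred on $\gamma$ may leak across a boundary geodesic of $\Sigma$; since long boundary geodesics have thin collars, one must argue that the escaping portion of $D$ is negligible, or replace the plain area comparison by the sharper packing estimate of Theorem \ref{schmutzsys}. Achieving this decoupling of the cuspidal area from the embedded-disk area, uniformly in $n$, is the crux on which the whole argument turns.
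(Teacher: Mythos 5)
Your argument is complete only in the regime $n\lesssim g$, where the classical embedded-disk bound suffices; for $n\gg g$ you correctly identify the difficulty but do not resolve it, and the route you sketch has real problems. A systole of a punctured surface need not be non-peripheral in a one-holed torus at all --- by Proposition \ref{2cusps} systoles frequently bound two cusps, in which case the subsurface they determine is a pair of pants with two cusps, not a handle. Even when the systole does lie in an essential one-holed torus $\Sigma$, the disk $B(p_0,\ell/2)$ is not contained in $\Sigma$, and the intrinsic bound for a one-holed torus is $\cosh(\ell/2)\le\cosh(\ell(\partial\Sigma)/6)+\frac{1}{2}$, which is useless because $\ell(\partial\Sigma)$ is a closed geodesic of $S$ whose length is not controlled by $g$; in particular your claim that the genus-one case is capped by $2\arccosh 2$ is false as stated (Schmutz Schaller's bound for genus one is $4\arccosh 3$). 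So the ``decoupling'' you defer to is precisely the content of the theorem, and the subsurface strategy does not obviously deliver it.

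The paper takes a different and more elementary route that you may want to compare with. Around each cusp $c$ one grows the neighbourhood $D_r(c)$ of the area-$2$ horoball; Lemma \ref{twobounds} converts a tangency between two such neighbourhoods (resp.\ a self-tangency of one of them) at parameter $r$ into an explicit closed geodesic of length $4\arccosh(e^r)$ (resp.\ $2\arccosh(e^r-1)$). Since $\area(D_r(c))=2e^r$ and the total area is $2\pi(2g-2+n)$, some tangency must occur by the time $e^r$ reaches the average area per cusp, roughly $\pi(2g-2+n)/n$. The case split is on $n$ versus $\sqrt{2\pi g}$: when $n\ge\sqrt{2\pi g}$ the average area per cusp is $O(\sqrt{g})$, so a tangency occurs with $e^r=O(\sqrt{g})$ and $\ell\le 4\arccosh(O(\sqrt{g}))=2\log g+O(1)$; when $n<\sqrt{2\pi g}$ and no two horoballs are close, a single $D_r(c)$ must become tangent to itself with $e^r=O(g)$, and the self-tangency estimate $2\arccosh(e^r-1)$ again gives $2\log g+O(1)$. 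This is what makes the constant independent of $n$: many cusps force an early collision, few cusps leave enough area for the single-cusp argument. Your genus-$0$ observation at the end of the first paragraph is consistent with this picture, but the cusp-neighbourhood mechanism, not a subsurface decomposition, is what closes the argument.
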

The result is not surprising in view of the results for closed surfaces and Schmutz Schaller's bound, but it is interesting to note that it is asymptotically a stronger bound when the growth of the number of cusps is bounded above by $g^{\frac{1}{2}}$.

Our results on topological configurations of systoles can be summarized as follows. 
\begin{theor}\label{thm:intro3}
If $\alpha$ and $\beta$ are systoles of a surface $S\in  \ms_{g,n}$, then
$$i(\alpha,\beta)\leq 2$$
and if $i(\alpha,\beta)= 2$, then either $\alpha$ or $\beta$ surrounds two cusps. Furthermore, for every genus $g\geq 0$, there exists $n(g)\in \mathbb{N}$ and a surface $S_g$ of genus $g$ with $n(g)$ cusps which has systoles that intersect twice.
\end{theor}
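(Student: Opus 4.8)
The engine of the argument is the classical fact that smoothing the corners of a piecewise-geodesic loop strictly decreases its length. Concretely, if $p$ and $q$ are two points of $\alpha\cap\beta$, an arc $a$ of $\alpha$ and an arc $b$ of $\beta$ both joining $p$ to $q$ concatenate to a closed curve $\gamma=a\cup b$ with genuine corners at $p$ and $q$, since the two geodesics cross transversally. Hence the geodesic representative of $\gamma$, should it have one, is strictly shorter than $\ell(a)+\ell(b)$. The first step is to record the consequence for systoles: if $\ell(a)+\ell(b)\le\sys(S)$ and $a,b$ meet only at $p,q$, so that $\gamma$ is simple, then $\gamma$ is either null-homotopic or peripheral. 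Indeed, were $\gamma$ essential and non-peripheral its geodesic representative would be a closed geodesic of length $<\sys(S)$, which is absurd; and were $\gamma$ null-homotopic the two distinct geodesic arcs $a\ne b$ would cobound a bigon, impossible in a hyperbolic surface. So such a short simple $\gamma$ bounds a once-punctured disk and surrounds exactly one cusp.

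I would treat the structural statement for $i(\alpha,\beta)=2$ first, as it is the cleanest. Let $p,q$ be the two intersection points; they cut $\alpha$ into arcs $\alpha_1,\alpha_2$ and $\beta$ into arcs $\beta_1,\beta_2$, and since there are no further intersections all four curves $c_{ij}=\alpha_i\cup\beta_j$ are simple. The key bookkeeping is that $\ell(c_{11})+\ell(c_{22})=\ell(\alpha)+\ell(\beta)=2\sys(S)$ and likewise $\ell(c_{12})+\ell(c_{21})=2\sys(S)$, so each of the two pairs contains a curve of length at most $\sys(S)$, which by the previous paragraph surrounds a single cusp. Reading the arcs as based paths from $p$ to $q$, one has the homotopy identities $\alpha\simeq c_{1j}\,\overline{c_{2j}}$ and $\beta\simeq\overline{c_{i1}}\,c_{i2}$. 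Whichever member is peripheral in the first pair and whichever is peripheral in the second then combine, through one of these identities, to express either $\alpha$ or $\beta$ as a product of two loops each freely homotopic to a loop around a cusp; since $\alpha$ and $\beta$ are simple and essential the two cusps are distinct, and the corresponding curve bounds a pair of pants with two cusps. Checking the four sign cases, in each of them it is $\alpha$ or $\beta$ that surrounds two cusps, as claimed.

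To bound the intersection number I would argue by contradiction, assuming $i(\alpha,\beta)=k\ge 3$. Ordering the intersection points cyclically along $\alpha$ produces $k$ arcs whose lengths sum to $\sys(S)$, so the shortest of them, $a$, joins two consecutive points $p,q$ with $\ell(a)\le\sys(S)/3$ and has no interior intersection with $\beta$. For either arc $\beta_j$ of $\beta$ between $p$ and $q$ the curve $a\cup\beta_j$ is simple, and since $\ell(\beta_1)+\ell(\beta_2)=\sys(S)$ at least one of them has length at most $\tfrac12\sys(S)+\ell(a)<\sys(S)$ and hence surrounds a single cusp, bounding a once-punctured disk. The heart of the matter, and what I expect to be the main obstacle, is to upgrade this from "there is a cusp" to an actual contradiction. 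My plan is to analyse the regular neighborhood $N$ of $\alpha\cup\beta$, a subsurface with $\chi(N)=-k$ whose boundary components are surgery curves of the above type: the shortening argument forces the short ones to be peripheral, so the components of $S\setminus N$ that they bound are once-punctured disks, and I would show by a combinatorial analysis of how the $\alpha$- and $\beta$-arcs trace out $\partial N$ that, once $k\ge 3$, some boundary curve of $N$, or some further surgery curve built from an arc disjoint from these punctured disks, is forced to be simultaneously essential, non-peripheral and of length $<\sys(S)$. Distinguishing the possible topologies of $N$, planar versus not, is where the care is needed.

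Finally, for the existence statement I would exhibit an explicit model in genus $0$ and then propagate it. A symmetric complete hyperbolic sphere with four cusps carries the three curves $\{12|34\}$, $\{13|24\}$, $\{14|23\}$, each separating the cusps into two pairs; by symmetry these have equal length, are the systoles, pairwise intersect exactly twice, and each surrounds two cusps, which already settles $g=0$. For arbitrary genus $g$ I would build a surface with enough symmetry, for instance by attaching a genus-$g$ piece along a curve carrying a long collar, chosen so that the two distinguished curves remain the shortest geodesics; the freedom to choose the number of cusps $n(g)$ provides exactly the room needed to keep the added topology from creating shorter systoles. Verifying that the two curves are indeed systoles in the resulting surface is then a direct length computation in the symmetric model.
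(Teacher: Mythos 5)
Your treatment of the case $i(\alpha,\beta)=2$ is essentially the paper's argument (surgery at the two intersection points, length bookkeeping, ruling out null-homotopic curves via bigons, and assembling two peripheral surgery curves into a curve bounding two cusps), and your $g=0$ example is fine. But the other two claims have genuine gaps. For the bound $i(\alpha,\beta)\leq 2$ you get exactly as far as ``some surgery curve of length $<\sys(S)$ must be peripheral'' and then defer the contradiction to an unexecuted ``combinatorial analysis of the regular neighborhood''; that analysis is the whole difficulty, and you have no tool to rule out the scenario in which every short surgery curve is peripheral. The paper's mechanism, which you are missing, is to classify the arcs between consecutive intersection points by whether the crossing orientations at their two endpoints agree or disagree: for an arc where they agree, a push-off of the surgered curve meets $\alpha$ transversally in exactly one point, so it is non-trivial in homology and hence essential --- it cannot be peripheral, contradiction. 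This forces all arcs to be of the other type, which makes $i(\alpha,\beta)$ even, so a putative counterexample has $i\geq 4$; then two consecutive $\beta$-arcs issuing from one intersection point have total length $\leq\tfrac12\sys(S)$ and, together with $\alpha$, span an embedded four-holed sphere containing an essential curve shorter than the systole. Without the homological step your plan stalls precisely where you say it does.

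The existence statement for $g\geq 1$ is likewise asserted rather than proved. ``Attach a genus-$g$ piece\dots chosen so that the two distinguished curves remain the shortest geodesics'' is exactly the claim requiring proof, and it is delicate: by the paper's own systole bound the resulting surface has $\sys\leq 2\log g+K$, so either your distinguished curves must be allowed to stay uniformly short while the added genus carries no shorter essential curve, or the number of cusps must grow with $g$. The paper resolves this with an explicit zero-shear ideal triangulation built from $32$-triangle square blocks, giving $n(g)=46g-46$ cusps, and the verification that the candidate curves (trace $34$, word $RL^4RL^4$) are systoles occupies a separate lemma using the Brooks--Makover $L,R$ trace machinery to exclude all shorter words. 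Some such explicit construction and verification is needed; ``a direct length computation in the symmetric model'' does not exist until the model does.
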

The above result is in contrast with closed surfaces where systoles can intersect at most once.

Finally, we obtain the following bound which relates systolic lengths and kissing numbers. 
\begin{theor}\label{thm:intro4}
If $S\in \ms_{g,n}$ has systole of length $\sys(S)=\ell$, then
$$\kiss(S)\leq 20\, n\cosh(\ell/4)+200\,\frac{e^{\ell/2}}{\ell}(2g-2+n).$$
\end{theor}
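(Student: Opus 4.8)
The plan is to split the systoles into those that meet the cusp regions and those contained in the thick part, matching respectively the two summands $20\,n\cosh(\ell/4)$ and $200\,\frac{e^{\ell/2}}{\ell}(2g-2+n)$. Two standing facts will be used repeatedly: a systole is a simple closed geodesic of length $\ell$, and by Gauss--Bonnet $\area(S)=2\pi(2g-2+n)$. I would remove from $S$ the maximal embedded horoball at each of the $n$ cusps, whose bounding horocycle has universally bounded length, and call the remainder the thick part.

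For the thick part I would run a collar-packing argument with an overlap correction. A systole contained in the thick part carries an embedded collar of half-width $w=\arcsinh(1/\sinh(\ell/2))\asymp e^{-\ell/2}$ and area $2\ell\sinh w=\frac{2\ell}{\sinh(\ell/2)}\asymp \ell\,e^{-\ell/2}$. Writing $k$ for the number of such systoles, the union of their collars has area at least (total length)$\times 2w$ minus the overlaps at the crossing points, i.e. at least $2w\,k\ell-(\text{overlap})$. The overlaps are controlled by two inputs: first, distinct geodesics bound no embedded bigon and, by Theorem \ref{thm:intro3}, cross at most twice, so a double crossing cannot trap a small disk; second, two systoles cannot cross at an arbitrarily small angle (a near-parallel pair would otherwise produce a closed geodesic shorter than $\ell$), so each crossing contributes overlap only of order $w^2$. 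Comparing the collar area with $\area(S)$ then yields, to leading order, $2w\,k\ell\lesssim 2\pi(2g-2+n)$, that is $k\lesssim \frac{e^{\ell/2}}{\ell}(2g-2+n)$; the constant $200$ is chosen to absorb the lower-order overlap term and the explicit constants above.

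For the cusp term I fix a cusp, normalised as the quotient of the horoball at $\infty$ by $z\mapsto z+1$. A systole meeting the embedded horoball crosses its boundary horocycle, and the portion of the systole lying above a horocycle of height $y_0$ is a geodesic bigon whose two feet are at horocyclic distance $D$ and whose length is $2\arcsinh(D/2)$; hence $D\le 2\sinh(\ell/2)$ and the excursion climbs to height $\asymp\cosh(\ell/2)$. To count the systoles through the cusp I would choose the height $y_0$ so that the feet of distinct systoles on this horocycle are forced apart by a definite gap: two feet closer than this gap would, by the same no-bigon principle, force the two systoles to meet more than twice, contradicting Theorem \ref{thm:intro3}. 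The number of systoles meeting the cusp is then at most the horocyclic length of the chosen horocycle divided by the gap, which I expect to simplify to order $\cosh(\ell/4)$; summing over the $n$ cusps gives the first summand, and adding the two counts gives the stated inequality.

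The main obstacle is the cusp count. In the thick part the collar geometry is uniform and the packing closes as soon as the crossing angles are bounded below, but near a cusp the collars degenerate, length-$\ell$ geodesics climb to height $\asymp e^{\ell/2}$, and, as Theorem \ref{thm:intro3} already signals, systoles can surround two cusps and meet twice. The delicate point is to locate the auxiliary horocycle at exactly the height where the excursion arcs are simultaneously long enough to force their feet apart by a gap of the right size and numerous enough to capture every systole touching the cusp, so that the naive winding count of order $\sinh(\ell/2)$ collapses to the correct order $\cosh(\ell/4)$. Converting the topological bound $i(\alpha,\beta)\le 2$ into such a quantitative, surface-independent horocyclic separation is where the real work lies.
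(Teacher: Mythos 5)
Your two-part strategy is in the spirit of the paper's proof (which partitions the systoles into $A(S)$, the systoles bounding two cusps, $B(S)$, those pairing up to bound one cusp, and $C(S)$, the rest), but both halves of your sketch have genuine gaps. For the thick part, first-order inclusion--exclusion on collars does not close. The union of the collars has area at least $2wk\ell$ minus the total overlap, and the total overlap is of order $k^{2}w^{2}/\sin\theta_\ell$, since each of the up to $\binom{k}{2}$ crossings contributes a rhombus of area $\asymp w^{2}/\sin\theta_\ell$. This is \emph{not} a lower-order term: with $w\asymp e^{-\ell/2}$ and $\sin\theta_\ell\asymp e^{-\ell/4}$ it dominates $2wk\ell$ as soon as $k\gtrsim \ell e^{\ell/4}$, which is far below the target $200\,e^{\ell/2}(2g-2+n)/\ell$ (and is a bounded threshold when $\ell$ is bounded). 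The resulting inequality $2wk\ell-Ck^{2}w^{2}/\sin\theta_\ell\le \area(S)$ is satisfied by all sufficiently large $k$ and so yields no bound at all; the constant $200$ cannot absorb it. What is needed is a \emph{local} multiplicity bound, which is how the paper proceeds (following \cite{parlier}): cover the thick part by $F\lesssim e^{\ell/2}(2g-2+n)$ balls of radius $r(\ell)$, show that each ball meets at most $G\lesssim 1/\arcsinh(\sin\theta_\ell)$ systoles of $C(S)$ (any two systoles through a common such ball must intersect, near the ball and at angle at least $\theta_\ell$), note that each systole meets at least $H\gtrsim \ell\sinh(\ell/4)$ balls, and conclude $|C(S)|\le FG/H$.

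For the cusp term the key step is missing, as you acknowledge. The separation claim is not justified: two geodesic excursions into a horoball whose feet on a horocycle are very close need not intersect at all (one excursion can be nested inside the other), so closeness of feet does not contradict $i(\alpha,\beta)\le 2$, and no choice of auxiliary horocycle is exhibited. The paper sidesteps this by making the first term topological rather than geometric. A systole bounding two cusps forces the two area-$2$ horoballs to lie at the exact distance $d(\ell)=2\log\cosh(\ell/4)$, and any two cusps at this distance from a fixed cusp $c$ cut off an arc of $\partial H_c$ of length at least $1/\cosh(\ell/4)$; since $\partial H_c$ has length $2$, each cusp sees at most $\lfloor 2\cosh(\ell/4)\rfloor$ others, giving $|A(S)|\le \tfrac{n}{2}\lfloor 2\cosh(\ell/4)\rfloor$. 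Separately, all systoles that pair up to bound a fixed cusp $c$ must pairwise intersect (exactly once), lie at a fixed distance $D(\ell)$ from $H_c$, and meet at angle at least $\theta_\ell$, which bounds their number per cusp by $m(\ell)\asymp\cosh(\ell/4)$. You would need to replace the horocycle-feet argument by counts of this kind, and also to handle the exceptional signatures ($g=0$ and $(g,n)=(1,1)$) that the paper treats separately.
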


The article is organized as follows. In Section \ref{sec:length} we prove our upper bounds on systole length. Section \ref{sec:topology} is dedicated to the study of the topological configurations of systoles. In the final section we prove Theorems \ref{thm:intro4} and  \ref{thm:intro1}.

{\bf  Acknowledgements.}
{The authors would like to thank Bram Petri for useful discussions and the reviewer for her or his pertinent remarks and suggestions. Both authors were fully supported by Swiss National Science Foundation grant number PP00P2\_128557. In addition, the authors acknowledge support from U.S. National Science Foundation grants DMS 1107452, 1107263, 1107367 "RNMS: GEometric structures And Representation varieties" (the GEAR Network).}

\section{Bounds on lengths of systoles}\label{sec:length}
We denote by $\ms_{g,n}$ the {\it moduli space} of surfaces of signature $(g,n)$, by which we mean the space of all complete, finite area hyperbolic surfaces of genus $g$ with $n$ cusps up to isometry. We shall always consider that $g$ and $n$ satisfy $3g-3 + n >0$. A {\it systole} of a surface $S\in \ms_{g,n}$ is a shortest closed geodesic. We think of systoles -- and closed geodesics in general -- as being non-oriented. Given a surface $S$, we denote its systole length (the length of one of its systoles) by $\sys(S)$. The main objective of this section is to show that every surface of genus $g\geq 1$, with or without cusps, has systole length bounded above by a function which only depends on the genus.

For any cusp $c$, let $H_c$ be the associated open horoball region of area $2$. By the  collar lemma (see for instance \cite[Chapter 4]{buser}), two such regions are disjoint.

For any cusp $c$ and any non-negative $r$, define the set $D_r(c)$ to be
$$D_r(c):=\{p\in S\st d(p,H_c)< r\}\cup H_c.$$
If $D_r(c)$ is homeomorphic to a once-punctured disk, we can compute its area, which is
$$\area(D_r(c))=2e^r.$$
\begin{lemma}\label{twobounds}\ \\
(a) If there are two cusps $c$ and $c'$ such that $D_r(c)$ and $D_r(c')$ are tangent, then the simple closed geodesic forming a pair of pants with them has length $4\arccosh(e^ r)$, so
$$\sys(S)\leq 4\arccosh(e^r).$$
(b) If $D_r(c)$ is tangent to itself for some $r\geq \log(2)$, then $$\sys(S)\leq 2\arccosh(e^r-1).$$

\end{lemma}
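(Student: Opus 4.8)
The plan is to run both parts as horoball comparisons in the universal cover $\Hyp$, turning each tangency condition into a distance between horoballs and then reading off a closed geodesic.

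\textbf{Part (a).} I would normalize so that the two cusps lift to $0$ and $\infty$, with parabolic generators $X\colon z\mapsto z+s$ fixing $\infty$ and $Y$ fixing $0$ of ``translation length'' $u$. The area-$2$ horoballs are then $\{y>s/2\}$ and the Euclidean disc tangent to $\R$ at $0$ of diameter $2/|u|$; their common perpendicular is the imaginary axis, so the distance between them is $d=\log\!\big(s|u|/4\big)$, valid once $s|u|>4$. Tangency of $D_r(c)$ and $D_r(c')$ means precisely $d=2r$. The simple closed geodesic $\gamma$ cutting off the pair of pants has holonomy $(XY)^{-1}$, of trace $2+su$; since its axis separates the two punctures one has $su<0$, and $|2+su|=2\cosh(\ell/2)$ then gives $s|u|=-su=4\cosh^2(\ell/4)$. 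Substituting into $d=2r$ yields $\cosh(\ell/4)=e^{r}$, i.e.\ $\ell=4\arccosh(e^{r})$, and as $\gamma$ is a closed geodesic of $S$ we get $\sys(S)\le 4\arccosh(e^{r})$.

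\textbf{Part (b).} Here I would put the single cusp at $\infty$ with parabolic $P\colon z\mapsto z+t$ and area-$2$ horoball $B=\{y>t/2\}$. Self-tangency of $D_r(c)$ means $B$ is tangent, at distance $2r$, to a translate $hB$ with $h=\left(\begin{smallmatrix}a&b\\ c&d\end{smallmatrix}\right)\in\Gamma\setminus\langle P\rangle$; the horoball $hB$ is tangent to $\R$ at $a/c$ with Euclidean diameter $2/(tc^{2})$, so the distance between $B$ and $hB$ along the vertical through $a/c$ is $2\log\!\big(t|c|/2\big)$, and setting this equal to $2r$ gives $t|c|=2e^{r}$. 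Now I would consider the family $P^{m}h$ for $m\in\Z$: these are genuine elements of $\Gamma$, all with lower-left entry $c$, whose traces form the arithmetic progression $\{\,\operatorname{tr}(h)+m\,tc : m\in\Z\,\}$ of step $|tc|=2e^{r}$.

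Since $\Gamma$ is torsion-free it contains no elliptics, so no term of this progression lies in the open window $(-2,2)$. This is where the hypothesis enters: when $r\ge\log 2$ the step $2e^{r}$ is at least $4$, so if $T_{+}$ denotes the smallest progression value that is $\ge 2$, its predecessor $T_{+}-2e^{r}$ must already be $\le -2$, forcing $T_{+}\le 2e^{r}-2$. The associated element is hyperbolic with $|\operatorname{tr}|\le 2e^{r}-2$, hence a closed geodesic of length at most $2\arccosh(e^{r}-1)$, which gives the claimed bound $\sys(S)\le 2\arccosh(e^{r}-1)$. The step I expect to be most delicate, and the reason the bound is $2\arccosh(e^r-1)$ rather than something smaller, is exactly this identification of the correct closed geodesic: the naive minimal-trace member of the progression can be parabolic (the loop homotopic into the cusp $c$), and it is precisely the condition $r\ge\log 2$ — step at least $4$ against the forbidden elliptic window $(-2,2)$ — that guarantees a genuinely hyperbolic neighbour of trace at most $2e^{r}-2$.
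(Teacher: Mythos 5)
Your argument is correct in substance but follows a genuinely different route from the paper. The paper works entirely with polygon decompositions and hyperbolic trigonometry: for (a) it cuts the two-cusp pair of pants into four quadrilaterals and computes with explicit Euclidean circles in the upper half-plane; for (b) it splits into two sub-cases according to whether the pair of pants determined by $c$ and the self-tangency arc has a second cusp or two geodesic boundary components, and computes the boundary lengths in each case. You instead phrase everything in terms of $\PSL_2(\R)$ matrices, horoball heights and traces. In (a) the two computations are equivalent (note that you do not actually need to resolve the sign of $su$: whichever of $XY$, $XY^{-1}$ is the boundary holonomy has trace of absolute value $s|u|-2$, so the formula $\cosh(\ell/4)=e^r$ comes out the same either way). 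Your proof of (b) is the more interesting divergence: the arithmetic progression of traces $\operatorname{tr}(h)+m\,tc$ with step $2e^r$ avoiding the elliptic window $(-2,2)$ unifies the paper's two sub-cases into one statement. Indeed, the two progression terms adjacent to the gap are precisely the holonomies of the two boundary components of the paper's pair of pants; the paper's ``two cusps'' case is exactly your situation where one of these terms is $\pm 2$, and the splitting $T_+ + |T_+ - 2e^r| = 2e^r$ recovers the paper's relation $\ell(\alpha)=2\arccosh(ae^r)$, $\ell(\beta)=2\arccosh((1-a)e^r)$. What the paper's approach buys is explicitness (it identifies the short curve geometrically, which is reused elsewhere, e.g.\ in the formula $d(\ell)=2\log\cosh(\ell/4)$); what yours buys is a single case-free inequality.

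One point to tighten in (b): as you note, the term $T_+$ may equal $2$ (parabolic), in which case you must pass to its predecessor $T_+-2e^r$, of absolute value $2e^r-2$; this is hyperbolic only when $r>\log 2$ strictly. At $r=\log 2$ both endpoints of the gap could a priori be parabolic and your argument produces no geodesic --- but there the claimed bound reads $\sys(S)\leq 2\arccosh(1)=0$, so that configuration cannot actually occur on a surface with $3g-3+n>0$ (the paper's case analysis shows the same degeneration). So your proof establishes the lemma for all $r>\log 2$, which is all that is ever used, but you should state the predecessor step explicitly rather than asserting that the element of trace $T_+$ itself is hyperbolic.
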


\begin{proof}
(a) Consider the pair of pants determined by the two cusps and the simple closed geodesic $\gamma$ surrounding them. Cut it along the orthogonal from $\gamma$ to itself, the shortest geodesic between the cusps and the perpendiculars from the cusps to $\gamma$. Consider one of the four obtained quadrilaterals; we denote its vertices by $q$, $s$, $t$ and $c$ and the intersection point of $\partial H_c$ with a side by $p$, as in Figure \ref{lambert}.
\begin{figure}[H]
\begin{center}
\includegraphics{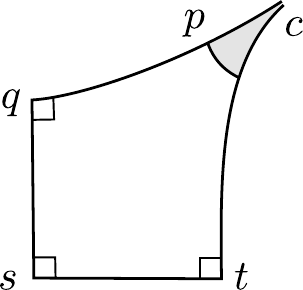}
\caption{One of the quadrilaterals}\label{lambert}
\end{center}
\end{figure}
Draw the quadrilateral in the upper half-plane, choosing infinity as ideal point. We fix the two geodesics containing $qc$ and $tc$ to be $x=0$ and $x=1$. The area of $H_c$ intersected with the quadrilateral is $1$, so $\partial H_c$ is given by $y=1$ and $p=i$. Moreover, $d(p,q)=\frac{1}{2}d(H_c,H_{c'})=r$, so $q=ie^{-r}$. Consider $\mathcal{C}_1$ and $\mathcal{C}_2$, the Euclidean circles representing the geodesics through $q$ and $s$ and through $s$ and $t$.
\begin{figure}[H]
\begin{center}
\includegraphics{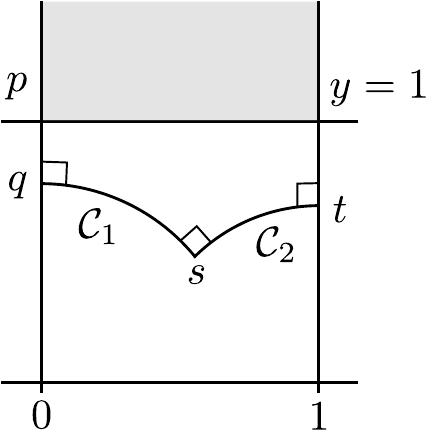}\label{upperhalflambert}
\caption{In the upper half-plane}
\end{center}
\end{figure}
Since $\mathcal{C}_1\perp \{x=0\}$, $\mathcal{C}_2\perp \{x=1\}$ and $\mathcal{C}_1\perp \mathcal{C}_2$, they have equations
$$\mathcal{C}_1: x^2+y^2=R^2$$
and
$$\mathcal{C}_2: (x-1)^2+y^2=1-R^2$$
for some $R$. As $q\in \mathcal{C}_1$, we have $R=e^r$. By imposing $d(t,s)=\ell/4$, we obtain $\ell=4\arccosh(e^r)$.

(b) The cusp $c$ with the curve of length $2r$ from $H_c$ and back determines a pair of pants with at least one simple closed geodesic as boundary. 

If the pair of pants has two cusps and a boundary curve $\alpha$, we can cut it along the geodesic between the two cusps, the shortest geodesics between the cusps and $\alpha$ and the geodesic containing curve of length $2r$. We get two right-angled triangles with two ideal vertices and $\frac{\pi}{2}$ and two quadrilaterals with three right angles and an ideal vertex, as in Figure \ref{case2withcusp}.
\begin{figure}[H]
\begin{center}
\includegraphics{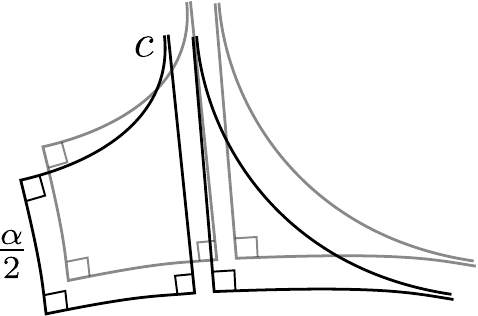}
\caption{The cut pair of pants with two cusps}\label{case2withcusp}
\end{center}
\end{figure}
By direct computation similar to before, we obtain $$\ell(\alpha)=2\arccosh(e^r-1).$$

If the pair of pants has two boundary curves we denote them $\alpha$ and $\beta$, and we suppose that $\ell(\alpha)\leq \ell(\beta)$. We cut along the orthogonal from $\alpha$ to $\beta$, the shortest geodesics from $\alpha$ and $\beta$ to the horoball and the geodesic containing the curve of length $2r$. We obtain four quadrilaterals, with three right angles and an ideal vertex, two by two isometric.
\begin{figure}[H]
\begin{center}
\includegraphics{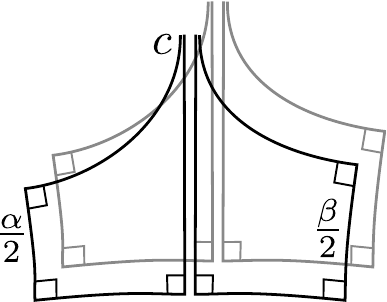}
\caption{The cut pair of pants with one cusp}
\end{center}
\end{figure}
Again by direct computation we have
$$\ell(\alpha)=2\arccosh(ae^r)$$
and
$$\ell(\beta)=2\arccosh((1-a)e^r)$$
where $a$ is the area of $H_c$ intersected with one of the two quadrilaterals containing a part of $\alpha$. Since  $\ell(\alpha)\leq \ell(\beta)$, we have $a\leq \frac{1}{2}$. Moreover, $\alpha$ is longest when $a$ is maximum, that is when $a=\frac{1}{2}$. In this case
$$\ell(\alpha)=\ell(\beta)=2\arccosh\left(\frac{e^r}{2}\right).$$
Since by assumption $r\geq\log(2)$, we get that in both cases the curve $\alpha$ satisfies $$\ell(\alpha)\leq2\arccosh(e^r-1).$$
\end{proof}

\begin{rmk}\label{leqlog2}
From the proof of the lemma we also have that if $D_r(c)$ is tangent to itself for some $r\leq \log(2)$, then $\sys(S)\leq 2\arcsinh(1)$.
\end{rmk}

We can now prove our bound on systole length for surfaces of genus $g\geq 1$.

\begin{theor}\label{sysbound}
There exists a universal constant $K<8$ such that every $S\in\ms_{g,n}$ satisfies
$$\sys(S)\leq 2\log(g)+K.$$
\end{theor}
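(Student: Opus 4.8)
The plan is to treat the closed case and the cusped case separately, and in the cusped case to run the horoball-growing of Lemma \ref{twobounds} in two different ways, keeping whichever gives the better bound. Throughout, Gauss--Bonnet fixes the total area at $\area(S)=2\pi(2g-2+n)$.

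If $n=0$ (so $g\geq 2$), I would use the classical embedded-disk argument: around a point $p$ of a systole the disk of radius $\sys(S)/2$ is embedded, because a loop at $p$ shorter than $\sys(S)$ would yield a closed geodesic shorter than the systole. Comparing $\area(B(p,\sys(S)/2))=2\pi(\cosh(\sys(S)/2)-1)$ with the total area gives $\cosh(\sys(S)/2)\leq 2g-1$, hence $\sys(S)\leq 2\arccosh(2g-1)\leq 2\log g+O(1)$.

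If $n\geq 1$, I would produce two independent bounds. For the first, grow the single region $D_r(c)$ of one cusp until it first fails to be an embedded once-punctured disk; since the area is finite this must happen, and the failure is a self-tangency, so Lemma \ref{twobounds}(b) (or Remark \ref{leqlog2} if this occurs before radius $\log 2$) applies. As the region stays embedded up to that radius $r_1$, its area $2e^{r_1}$ is at most $2\pi(2g-2+n)$, whence
$$\sys(S)\leq 2\arccosh\bigl(\pi(2g-2+n)-1\bigr).$$
The crucial feature is that growing a \emph{single} region can only ever lead to a self-tangency, never to case (a): this is what keeps the coefficient equal to $2$. For the second bound, grow \emph{all} $n$ regions simultaneously to the first radius $r_0$ at which two become tangent or one touches itself. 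Before $r_0$ the $n$ regions are disjoint embedded once-punctured disks, so $2n\,e^{r_0}\leq 2\pi(2g-2+n)$, that is $e^{r_0}\leq \pi(2g-2+n)/n$; whichever outcome of Lemma \ref{twobounds} occurs, the resulting length is at most the case (a) value, giving
$$\sys(S)\leq 4\arccosh\Bigl(\pi\tfrac{2g-2+n}{n}\Bigr).$$
The first bound is good when cusps are few (then $2g-2+n$ is comparable to $g$ and it reads $2\log g+O(1)$), while the second is good when cusps are many (then $(2g-2+n)/n$ is close to $1$ and it is bounded by a constant).

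Taking the minimum of the two bounds covers every $n$, since the two ranges in which each is small overlap. The main point to check—and the reason the constant is delicate—is the factor $4$ in case (a): where the single-cusp bound degrades (many cusps) one must verify that the all-cusps bound has already dropped below $8$. The extremal configuration is genus $1$ with many cusps, where $2g-2+n=n$ forces $e^{r_0}\leq \pi$, so that the case (a) bound is at most $4\arccosh(\pi)\approx 7.25<8$; this is exactly the source of the constant $K$. Carrying out the elementary comparison of the two bounds for all $(g,n)$, and confirming that their minimum never exceeds $2\log g+4\arccosh(\pi)$, is what remains; the genus-$1$, many-cusp case (systoles encircling two cusps) is the binding one and is what prevents $K$ from being pushed well below $8$.
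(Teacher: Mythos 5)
Your argument is correct and uses the same ingredients as the paper (the area formula $\area(D_r(c))=2e^r$, Gauss--Bonnet, and Lemma \ref{twobounds}), but you organize the case analysis differently: the paper splits the cusped case into three scenarios (many cusps; two cusps whose horoballs are close; few cusps all pairwise far apart), whereas you take the minimum of just two bounds, one from growing all $n$ regions simultaneously and one from growing a single region. The single-region bound silently merges the paper's Cases 2 and 3: the paper keeps them separate because it only invokes self-tangency after first ensuring $D_r(c)$ is disjoint from every other horoball, while you observe that this precaution is unnecessary --- the first failure of $D_r(c)$ to be an embedded once-punctured disk is always a self-tangency (if the region wraps around a second cusp, the resulting loop of length $2r$ just lands you in the two-cusp subcase of Lemma \ref{twobounds}(b)). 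That claim is true but is the one step you should justify explicitly, e.g.\ by lifting to the universal cover: all $\pi_1$-translates of a fixed lift of $H_c$ are again lifts of $H_c$, so the first overlap of $r$-neighborhoods produces a geodesic arc of length $2r$ from $H_c$ to itself, which is exactly the hypothesis of Lemma \ref{twobounds}(b). Your identification of the extremal configuration is also the right one --- it coincides with the paper's Case~1 bound at $g=1$, where the argument of $\arccosh$ degenerates to $\pi$ and yields $4\arccosh(\pi)\approx 7.25$ --- and the crossover between your two bounds occurs at $n\sim\sqrt{2\pi g}$, which is precisely the paper's Case~1 threshold, so the deferred numerical comparison does close with $K=4\arccosh(\pi)<8$. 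What your version buys is one fewer case and a slightly sharper single-region estimate ($\pi(2g-2+n)$ in place of $2\pi(g-1+\sqrt{2\pi g})$); what the paper's version buys is that it never has to argue about what the failure of embeddedness looks like when the growing region meets another horoball.
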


\begin{proof}
Set $\ell=\sys(S)$. 

We begin with recalling the well known situation where $n=0$ (and thus $g\geq 2$). As the surface is closed, any open disk $D_{\ell/2}(p)$ of radius $\ell/2$ is embedded in the surface and thus
$$\area(D_{\ell/2}(p))=2\pi(\cosh(\ell/2)-1)\leq \area(S)=2\pi(2g-2)$$
which in turn implies
$$\ell\leq 2\log(g)+2\log(4).$$

Suppose now that $n\geq 1$. We split the proof into three non-mutually exclusive cases. The first situation we consider is when there are ``many'' cusps (how many will be made explicit): in this case two $D_c(r)$'s have to meet for a ``small" $r$ and will determine a short curve. In the second case, we assume that there are two cusps which are close to each other and the systole length will be bounded by the length of the curve surrounding them. In the final situation there are ``few'' cusps and we further assume any two are far away: in this case we show that there is a cusp with a short loop from its horoball to itself which in turn determines a short curve.

\underline{Case 1:} $n\geq \sqrt{2\pi g}$

If the sets $D_r(c)$ are pairwise disjoint for different cusps $c$ and each homeomorphic to a once-punctured disk, then
$$\area\left(\bigcup_{c \mbox{  \scriptsize cusp}}D_r(c)\right)=2ne^r\leq \area(S)=2\pi(2g+n-2)$$
thus
$$e^r\leq \frac{\pi(2g-2+n)}{n}.$$

Since $n\geq \sqrt{2\pi g}$, this implies $$e^r\leq\frac{\sqrt{2\pi}(g-1)}{\sqrt{g}}+\pi.$$
So for some $r\leq \log \left(\frac{\sqrt{2\pi}(g-1)}{\sqrt{g}}+\pi\right)$ either two $D_r(c)$ are tangent to each other or one is tangent to itself. Lemma \ref{twobounds} now implies 
$$\ell\leq 4\arccosh\left(\frac{\sqrt{2\pi}(g-1)}{\sqrt{g}}+\pi\right).$$

\underline{Case 2:} there are two distinct cusps $c_1$ and $c_2$ with $d(H_{c_1},H_{c_2})\leq\log(2\pi(g-1+\sqrt{2\pi g}))$

By Lemma \ref{twobounds}
$$\ell\leq 4\arccosh\left(\sqrt{2\pi(g-1+\sqrt{2\pi g})}\right)$$
and we are done.

\underline{Case 3:} $0<n<\sqrt{2\pi g}$ and for any two cusps $c_1$, $c_2$ satisfy 
$$d(H_{c_1},H_{c_2})>\log(2\pi(g-1+\sqrt{2\pi g})).$$

We fix a cusp $c$. Since any two cusps are far away, for $r\leq \log(2\pi(g-1+\sqrt{2\pi g}))$ the set$D_r(c)$ is disjoint from any other $H_{c'}$. If it is also an embedded once-punctured disk, then
$$\area(D_r(c))=2e^r\leq\area(S)<4\pi(g-1+\sqrt{2\pi g})$$
so
$$e^r\leq \log(2\pi(g-1+\sqrt{2\pi g})).$$
We deduce that for some $r\leq \log(2\pi(g-1+\sqrt{2\pi g}))$, $D_r(c)$ is tangent to itself. By Remark \ref{leqlog2}, if $r\leq \log(2)$ then $\ell\leq 2\arcsinh(1)$. Otherwise, by Lemma \ref{twobounds}, we obtain
$$\ell\leq 2\arccosh(2\pi(g-1+\sqrt{2\pi g}))-1).$$

Now any surface with $n>0$ will be in one of the three cases detailed above and as such we can deduce:
\begin{gather*}\ell\leq \max\left\{4\arccosh\left(\sqrt{2\pi}(g-1)/\sqrt{g}+\pi\right),4\arccosh\left(\sqrt{2\pi(g-1+\sqrt{2\pi g})}\right),\right.\\ \left. 2\arccosh(2\pi(g-1+\sqrt{2\pi g}))-1)\right\} < 2\log(g)+8.
\end{gather*}
\end{proof}

Applying the techniques of the above theorem to punctured spheres, one can show that the systole length of punctured sphere is bounded by a uniform constant (which doesn't depend on the number of cusps). This is also a consequence of a theorem of Schmutz Schaller who provided a different bound for the systole length of punctured surfaces.
\begin{theor}[\cite{schmutz1}]\label{schmutzsys}
For $S\in \ms_{g,n}$, with $n\geq 2$ we have
$$\sys(S)\leq 4\arccosh\left(\frac{6g-6+3n}{n}\right).$$
\end{theor}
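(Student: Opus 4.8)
The strategy is to recover the bound directly from Lemma~\ref{twobounds} by growing the horoball neighborhoods of all cusps simultaneously, the decisive input being the sharp Fejes T\'oth bound on the density of horoball packings in the hyperbolic plane. First I would grow the sets $D_r(c)$ uniformly in $r$, starting from the area-$2$ horoballs $H_c$. For small $r$ the regions are pairwise disjoint embedded once-punctured disks, each of area $2e^r$, so their total area is $2ne^r$. Let $r_0$ be the first radius at which a tangency occurs (between two distinct regions or of a region with itself); for every $r<r_0$ we still have a genuine disjoint packing. At $r=r_0$, Lemma~\ref{twobounds} applies: if two distinct $D_{r_0}(c),D_{r_0}(c')$ are tangent then $\sys(S)\leq 4\arccosh(e^{r_0})$, while a self-tangency yields, via part (b) or Remark~\ref{leqlog2}, a bound no larger than $4\arccosh(e^{r_0})$ in the relevant range of $n\geq 2$. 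Thus it suffices to establish the area estimate $e^{r_0}\leq \frac{6g-6+3n}{n}$.

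For this I would invoke the density bound. Lifting the packing $\{D_r(c)\}$ to $\Hyp$ produces a packing of pairwise disjoint horoballs invariant under the deck group $\Gamma$, and Fejes T\'oth's theorem asserts that \emph{any} packing of horoballs in $\Hyp$ has density at most $3/\pi$, with equality exactly for three mutually tangent horoballs inscribed in an ideal triangle. On the quotient this reads, for every $r<r_0$,
\[
2ne^r \;\leq\; \tfrac{3}{\pi}\,\area(S)\;=\;\tfrac{3}{\pi}\cdot 2\pi(2g-2+n)\;=\;6(2g-2+n),
\]
and letting $r\to r_0$ gives $e^{r_0}\leq \frac{3(2g-2+n)}{n}=\frac{6g-6+3n}{n}$, which combined with the previous paragraph finishes the proof. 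It is worth noting that the quantity $6g-6+3n$ is precisely the number of edges of an ideal triangulation of $S$, and that the congruence covers of $\Hyp/\PSL_2(\Z)$ (tiled by ideal triangles carrying mutually tangent cusp horoballs) realize density exactly $3/\pi$; this confirms that $3/\pi$, rather than the naive density $1$ giving the weaker constant $\pi$, is the correct constant.

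The main obstacle is the rigorous justification of the $3/\pi$ density bound, since horoballs are noncompact and a crude ``sum of areas $\leq$ total area'' comparison only yields density $1$. The clean route is a local area comparison over the canonical (Delaunay / Epstein--Penner) ideal triangulation adapted to the packing: since $\area(S)=\pi(4g-4+2n)$, this triangulation has $F=4g-4+2n$ ideal triangles, each of area $\pi$, and one must show that in each ideal triangle the total area occupied by the three horoball pieces at its vertices is at most $3$. This per-triangle estimate is the technical heart: I would use the fact that the area a horocycle cuts off at an ideal vertex equals the length of the corresponding horocyclic arc, together with the disjointness constraints between the three horoballs (of the form $b_0 b_1\leq \tfrac14$ after normalizing one vertex to $\infty$), to see that the sum of the three arc lengths is maximized, equal to $3$, exactly at mutual tangency. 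Summing over the $F$ triangles gives total horoball area at most $3F=6(2g-2+n)$, hence density at most $3/\pi$, as required.
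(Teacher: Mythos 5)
The first thing to say is that the paper does not prove Theorem~\ref{schmutzsys} at all: it is quoted from Schmutz Schaller \cite{schmutz1} and used as a black box, so there is no internal proof to compare against. Your proposal is a reconstruction of the original argument, and its skeleton is the right one (the paper's introduction itself records that Schmutz Schaller's proof is ``a disk packing argument of Fejes T\'oth''): grow the regions $D_r(c)$ until the first tangency at $r=r_0$, convert the tangency into a length bound via Lemma~\ref{twobounds}, and control $e^{r_0}$ by the density $3/\pi$ of horoball packings. The arithmetic is correct, $2ne^{r_0}\leq \frac{3}{\pi}\cdot 2\pi(2g-2+n)$ gives exactly $e^{r_0}\leq\frac{6g-6+3n}{n}$, and you correctly diagnose why the naive density-$1$ comparison (which is what the paper's own Theorem~\ref{sysbound}, Case~1, uses) only yields the weaker constant $\pi$. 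One small slip: in the self-tangency case with $r_0$ near $0$, Remark~\ref{leqlog2} gives $\sys(S)\leq 2\arcsinh(1)$, which is \emph{not} bounded by $4\arccosh(e^{r_0})$; you should instead compare directly with the target $4\arccosh\bigl(\frac{6g-6+3n}{n}\bigr)$, whose minimum over admissible signatures with $n\geq 2$ is $4\arccosh(3/2)>2\arcsinh(1)$, so the conclusion survives.

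The genuine gap is in the justification of the $3/\pi$ density bound, which carries the entire weight of the improvement over the trivial argument. Your per-triangle estimate, as sketched, is false: it is not true that three \emph{pairwise disjoint} horoballs at the vertices of an arbitrary ideal triangle occupy area at most $3$. Normalize the triangle to have vertices $0$, $1$, $\infty$ and take the horoball at $\infty$ to be $\{y>t\}$ with $t$ small; disjointness only forces the horoballs at $0$ and $1$ to have Euclidean diameter at most $t$, and the piece at $\infty$ alone then has area tending to $\pi>3$ as $t\to 0$ (the identity ``area of the cusp piece $=$ length of the bounding horocyclic arc $=1/t$'' holds only for $t\geq 1$, i.e.\ while the horoball stays clear of the opposite edge). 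So pairwise disjointness plus the constraint $b_0b_1\leq\frac14$ is not enough; the Delaunay property of the Epstein--Penner decomposition adapted to the equal-area packing $\{D_r(c)\}$ must enter the per-cell optimization in an essential way (it is precisely what forbids one horoball from being that large relative to a cell, and what guarantees that no horoball centered at a cusp other than the cell's vertices meets the cell). This is the actual content of the Fejes T\'oth/B\"or\"oczky theorem in dimension $2$, and it is not recovered by the argument you outline. Since the paper itself treats the theorem as external, the clean fix is to cite the density theorem rather than reprove it; with that in hand, the rest of your argument does give the stated bound.
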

For $n\sim g^\alpha$, Schmutz Schaller's bound grows roughly like $4(1-\alpha)\log(g)$. So our bound is stronger for $\alpha<\frac{1}{2}$, while Schmutz Schaller's is better for $\alpha\geq \frac{1}{2}$.

\section{Intersection properties of systoles}\label{sec:topology} 

It is well known, via a simple cutting and pasting argument, that systoles on closed surfaces pairwise intersect at most once. On surfaces with cusps, this is not necessarily the case. For instance, on punctured spheres, it is not difficult to see that systoles can intersect twice (the simplest case is a four times punctured sphere with at least two systoles - they necessarily intersect and the minimal intersection number between two distinct curves is $2$). This phenomenon also occurs for surfaces with positive genus. An example of this can be derived from Buser's hairy torus (cf.\ \cite[Chapter 5]{buser}) with cusps instead of boundary curves and explicit examples in all genera are given in the sequel. On the other hand, since systole length is bounded within each moduli space, it follows from the  collar lemma that the intersection number between any two systoles is also bounded. This can be considerably sharpened: the first main result of this section will be that two systoles on punctured surfaces can intersect at most twice. 

We begin with some notation and well known preliminary results. A curve is {\it non-trivial} if it represents a non-trivial element of the fundamental group. A non-trivial curve is {\it essential} if it does not bound a cusp. In particular, systoles are the shortest essential curves of a surface. Given two closed curves $\alpha$ and $\beta$, we denote by $\inter(\alpha, \beta)$ their {\it geometric intersection number} (the minimum number of transversal intersection points among representatives in the isotopy classes $[\alpha]$ and $[\beta]$). Two curves are said to intersect {\it minimally} if they intersect minimally among all representatives of their respective isotopy classes. The unique geodesics in the isotopy classes of simple closed curves are also simple and intersect minimally.

Let $\alpha$ and $\beta$ be simple closed geodesics on a surface $S$ with $\inter(\alpha, \beta)\geq 2$ and fix orientations on them. The curve $\alpha$ divides $\beta$ into arcs between consecutive intersection points. We say such an arc is of {\it type I} if the orientations at the two intersection points are different and of {\it type II} if the orientations are the same.

Note that the orientation at each intersection point depends on the choice of orientations of $\alpha$ and $\beta$, but being of type I or II is independent on the choice of orientations.
\begin{figure}[H]
\begin{center}
\includegraphics{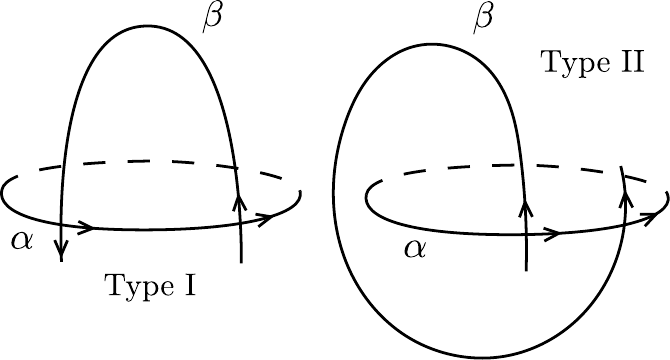}
\caption{The two kinds of arcs}
\end{center}
\end{figure}
\begin{lemma}\label{consecutive}
If $\alpha$ and $\beta$ are systoles of a surface $S\in \ms_{g,n}$ with $\inter(\alpha,\beta)\geq 2$, all arcs between consecutive intersection points are of type I.
\end{lemma}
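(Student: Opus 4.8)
The plan is to argue by contradiction: assume some arc $b$ of $\beta$, running between consecutive intersection points $p$ and $q$, is of type II, and manufacture from $b$ an essential closed curve strictly shorter than $\ell:=\sys(S)$, contradicting minimality. The first step is to record the local meaning of the type condition. After fixing orientations, the sign of a crossing records on which side of $\alpha$ the arc $b$ sits near that point, and being of type II (equal signs) means precisely that $b$ departs $p$ and returns to $q$ on \emph{opposite} local sides of $\alpha$, whereas type I means the same side. Since $\alpha$ and $\beta$ are simple closed geodesics in minimal position, there are no bigons between them, a fact I will use at the end.

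For the surgery, let $a_1,a_2$ be the two arcs into which $p$ and $q$ cut $\alpha$, and consider the two closed curves $c_j:=a_j\cup b$; their interiors miss $\alpha$, so they are embedded. The decisive consequence of type II is homological: because $b$ crosses from one side of $\alpha$ to the other, pushing $c_j$ off the shared arc $a_j$ shows that $c_j$ meets $\alpha$ exactly once algebraically, so $c_j$ is non-separating and hence essential (nontrivial and non-peripheral, since any loop around a cusp is null-homologous). Because $\alpha$ is a systole, every essential closed curve has geodesic length at least $\ell$, while $c_j$ carries genuine corners at $p$ and $q$; its geodesic representative is therefore strictly shorter than $\ell(c_j)=\ell(a_j)+\ell(b)$. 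This forces $\ell(a_j)+\ell(b)>\ell$ for \emph{both} $j$, and summing with $\ell(a_1)+\ell(a_2)=\ell$ gives $\ell(b)>\ell/2$. Consequently the complementary arc $b':=\beta\setminus b$ satisfies $\ell(b')<\ell/2$.

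The finish is to play $b'$ against the shorter of the two arcs of $\alpha$, say $a$, so that $\ell(a)\le\ell/2$: the curve $c:=a\cup b'$ then satisfies $\ell(c)=\ell(a)+\ell(b')<\ell$. If $c$ is essential, its geodesic representative is a closed geodesic of length $<\ell$, the desired contradiction. Thus the whole argument reduces to the essentiality of $c$, and this is the step I expect to be the main obstacle. Unlike in the closed case, $c$ could a priori be null-homotopic or bound a cusp, and $c$ need not even be simple once $\inter(\alpha,\beta)\ge 3$, since $b'$ recrosses $\alpha$. I would dispose of this by an innermost-arc analysis: were $c$ null-homotopic or cusp-bounding, the region it cuts off would, after resolving the intersections of $\alpha$ and $\beta$ inside it, yield a bigon or once-punctured bigon between $\alpha$ and $\beta$, contradicting their minimal position; the opposite-side configuration imposed by type II is what guarantees that the relevant region really is a disk or once-punctured disk. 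This is exactly the place where cusps enter in an essential way and where the bookkeeping is genuinely more delicate than on closed surfaces.
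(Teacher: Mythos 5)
Your first step is correct and is essentially the paper's key surgery: for a type II arc $b$ with endpoints $p,q$, each of the two curves $a_j\cup b$ can be pushed off $\alpha$ so as to meet it exactly once, hence is non-separating and essential, and adding the two resulting inequalities $\ell(a_j)+\ell(b)>\ell$ yields $\ell(b)>\ell/2$. (A minor imprecision: a loop around a cusp is \emph{not} null-homologous on a surface with $n\geq 2$ punctures; what you actually need is that its algebraic intersection number with every closed curve vanishes, which holds because it bounds a once-punctured disk.) This observation already disposes of the case of two or more type II arcs, since two disjoint subarcs of $\beta$ each of length greater than $\ell/2$ cannot coexist on a curve of length $\ell$ --- a cleaner bookkeeping than the paper's, which instead picks the shortest of the type II arcs.

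The gap is exactly where you predicted, and your proposed repair does not close it. If there is exactly one type II arc, then by parity of sign changes $\inter(\alpha,\beta)$ is odd and at least $3$, and the pushed-off curve $c=a\cup b'$ crosses $\alpha$ in $1+(\inter(\alpha,\beta)-2)$ points, an \emph{even} number; so the homological argument gives nothing and you must rule out $c$ bounding a cusp by other means. Your mechanism fails for two reasons. First, a once-punctured bigon between two simple closed geodesics does \emph{not} contradict minimal position: the bigon criterion forbids only unpunctured bigons, and indeed the twice-intersecting systoles on the four-punctured sphere that this very paper exhibits cut the surface into four once-punctured bigons. Since a cusp-bounding curve can be arbitrarily short, a cusp-bounding $c$ produces no length contradiction and the argument stops. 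Second, since $b'$ may recross the arc $a$, the curve $c$ need not be simple, so ``the region it cuts off'' is not well defined without further work. (The null-homotopic alternative, by contrast, is easily excluded: $a$ and $b'$ are distinct geodesic arcs with the same endpoints, so they cannot be homotopic rel endpoints.) The paper sidesteps this case entirely: when the unique type II arc is long, the remaining arcs have total length less than $\ell/2$, and it runs a different surgery on two \emph{consecutive type I} arcs $\beta_2,\beta_3$, using the embedded four-holed sphere neighborhood of $\alpha\cup\beta_2\cup\beta_3$ to produce a short essential curve. Some argument of that kind is needed to finish your single-type-II case.
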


\begin{proof}
By contradiction, suppose that $\beta$ contains arcs of type II. If there are at least two of them, there exists one, say $\beta_1$, of length at most $\frac{1}{2}\sys(S)$. Since $\beta_1$ divides $\alpha$ into two arcs, one of the two is of length at most $\frac{1}{2}\sys(S)$. Call this arc $\alpha_1$ and consider the curve $\alpha_1\cup\beta_1$.

If $\alpha_1\cup\beta_1$ were essential, its geodesic representative would be shorter than $\sys(S)$, which is impossible. Thus $\alpha_1\cup\beta_1$ must be non-essential. However one can construct a curve $\gamma$ homotopic to $\alpha_1\cup\beta_1$ such that $|\gamma\cap\alpha|=1$, so via the bigon criterion (see for instance \cite{farbmargalit})$\gamma$ and $\alpha$ intersect minimally. Thus
$$\inter(\gamma,\alpha)=1$$
and as such $\gamma$ is non-trivial in homology and is therefore essential, a contradiction.

If there is exactly one arc $\beta_1$ of type II, there should be at least two (consecutive) arcs $\beta_2$ and $\beta_3$ of type I. Then if $\ell(\beta_1)\leq \frac{1}{2}\sys(S)$, we can argue as before to obtain a contradiction. If not, then $\ell(\beta_2\cup\beta_3)\leq\frac{1}{2}\sys(S)$. The arcs $\beta_2$, $\beta_3$ and $\alpha$ determine an embedded four-holed sphere with a non-trivial curve of length at most $\sys(S)$. By construction, the geodesic in the isotopy class of this curve is strictly shorter than the systole, a contradiction.
\end{proof}
\begin{prop}
If $\alpha$ and $\beta$ are systoles of $S\in \ms_{g,n}$, then $\inter(\alpha,\beta)\leq 2$.
\end{prop}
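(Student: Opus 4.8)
The plan is to argue by contradiction. Suppose $\inter(\alpha,\beta)=k\geq 3$, and set $\ell=\sys(S)$. By Lemma \ref{consecutive} every arc of $\beta$ cut off by $\alpha$ is of type I, so the crossing signs alternate as one runs along $\beta$; since $\beta$ is closed this already forces $k$ to be even, hence $k\geq 4$, although the weaker bound $k\geq 3$ is all I will use to produce a short curve. Among the $k$ arcs into which $\alpha$ divides $\beta$, pick a shortest one, $b$, so that $\ell(b)\leq \ell/k<\ell/2$. Its two endpoints lie on $\alpha$ and split $\alpha$ into two subarcs; let $a$ be the shorter, so $\ell(a)\leq \ell/2$. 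Because the interior of $b$ is disjoint from $\alpha$, the arcs $a$ and $b$ meet only at their common endpoints, so $c:=a\cup b$ is a simple closed curve, and $\ell(c)<\ell$.

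First I would show that $c$ is non-trivial. If $c$ were null-homotopic it would lift to a closed loop in $\Hyp$, and that lift would consist of a lift of the geodesic arc $a$ followed by a lift of the geodesic arc $b$ joining the same pair of endpoints; two geodesic segments in $\Hyp$ with the same endpoints coincide, forcing $a=b$ and hence $\alpha=\beta$, which is absurd. (Equivalently, two distinct geodesic arcs bound no bigon.) Thus $c$ is non-trivial. Since $c$ is shorter than $\sys(S)$ and the systole is the shortest essential curve, $c$ cannot be essential, and therefore $c$ must bound a cusp: it cuts off a once-punctured disk.

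The remaining, and in my view hardest, step is to turn the existence of these short cusp-bounding curves into a contradiction when $k\geq 3$. The point to exploit is that the two subarcs of $\alpha$ determined by the endpoints of $b$ play symmetric roles: writing $a''$ for the complementary arc, both $a\cup b$ and $a''\cup b$ are non-trivial simple closed curves, and if each were to bound a cusp then $\alpha=a\cup a''$ would bound a twice-punctured disk, i.e.\ $\alpha$ would surround two cusps, which is precisely the borderline configuration appearing in the case $\inter(\alpha,\beta)=2$. To rule out $k\geq 3$ I would track the interplay between the two cyclic orders of the intersection points (along $\alpha$ and along $\beta$) together with the alternation of crossing signs coming from the type-I condition, and use it to locate a second short curve that is forced to be essential, contradicting minimality of $\ell$, or an impossible nesting of the punctured disks cut off by the various arcs. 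Controlling this combinatorial bookkeeping, rather than producing the short curves themselves, is the main obstacle.
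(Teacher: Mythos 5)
Your reduction to $k\geq 4$ via Lemma \ref{consecutive}, and the construction of the short simple closed curve $c=a\cup b$ (one arc of $\beta$ together with the shorter complementary arc of $\alpha$), are correct: $c$ is simple, has length less than $\sys(S)$, is non-trivial because two distinct geodesic arcs cannot bound a bigon, and hence must bound a cusp. But at that point the proof stops. A curve of length less than $\sys(S)$ that bounds a cusp is not a contradiction --- cusp-bounding curves are non-essential, so they are invisible to the systole --- and indeed Proposition \ref{2cusps} shows that exactly this configuration genuinely occurs when two systoles intersect twice. The step you defer (``track the interplay between the two cyclic orders \dots and use it to locate a second short curve that is forced to be essential'') is the entire content of the proposition, and you give no argument for it. As written, the proposal establishes only that every arc-plus-arc curve built this way bounds a cusp, which is consistent with $k\geq 4$ as far as your argument goes.

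The paper closes this gap by choosing a different short configuration. Since there are at least four arcs of $\beta$, some \emph{two consecutive} arcs $\beta_1,\beta_2$ emanating from a common intersection point have total length at most $\tfrac{1}{2}\sys(S)$; a regular neighborhood of $\alpha\cup\beta_1\cup\beta_2$ is then an embedded four-holed sphere (this uses the type I condition), and among its boundary curves one finds a non-trivial curve whose geodesic representative is forced to be strictly shorter than the systole --- the same device already used at the end of the proof of Lemma \ref{consecutive}. The essential difference from your construction is that the four-holed sphere argument produces a short curve that cannot be discarded as merely cusp-bounding, whereas the single-arc-plus-single-arc curve always can. If you want to salvage your route, you would need to prove a genuinely new combinatorial statement about the nesting of the punctured disks cut off by the various arcs; the paper's proof avoids this entirely.
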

\begin{proof}
Suppose by contradiction that $\inter(\alpha,\beta)>2$. By Lemma \ref{consecutive}, all arcs between consecutive intersection points are of type I, so $\inter(\alpha,\beta)$ is even. Thus there are at least four intersection points and at least four arcs of $\beta$ between consecutive intersection points. This implies that there is an intersection point and two arcs $\beta_1$ and $\beta_2$ departing from it with $\ell(\beta_1\cup\beta_2)\leq\frac{1}{2}\sys(S)$. We argue as in the proof of Lemma \ref{consecutive}: $\beta_1$, $\beta_2$ and $\alpha$ determine an embedded four-holed sphere with a non-trivial curve of length at most $\frac{1}{2}\sys(S)$. By construction, the geodesic in the isotopy class of this curve is strictly shorter than the systole, a contradiction.
\end{proof}

The next proposition shows that if two systoles intersect twice, there is a constraint on the topological configuration of the two curves.
\begin{prop}\label{2cusps}
If two systoles $\alpha$ and $\beta$ intersect twice, one of the two bounds two cusps.
\end{prop}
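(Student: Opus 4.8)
The plan is to read off the topological picture in a neighborhood of $\alpha\cup\beta$ and then let the systolic length inequality force the relevant boundary curves to be cusps. Write $\sys(S)=\ell$ and let $p_1,p_2$ be the two intersection points. They cut $\alpha$ into arcs $\alpha_1,\alpha_2$ and $\beta$ into arcs $\beta_1,\beta_2$, which I label so that $\ell(\alpha_1)\leq\ell(\alpha_2)$ and $\ell(\beta_1)\leq\ell(\beta_2)$, hence $\ell(\alpha_1),\ell(\beta_1)\leq\ell/2$. Form the four curves $c_{ij}:=\alpha_i\cup\beta_j$ for $i,j\in\{1,2\}$; each is an embedded simple closed curve, since $\alpha_i$ and $\beta_j$ are simple arcs meeting only at $p_1,p_2$. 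By Lemma \ref{consecutive} both arcs of $\beta$ are of type I, so the two intersection points carry opposite signs and $\alpha,\beta$ have zero algebraic intersection number. Consequently a regular neighborhood $N$ of $\alpha\cup\beta$ is an embedded four-holed sphere whose four boundary components are freely homotopic to the four $c_{ij}$ (the ``Venn diagram'' configuration of two circles meeting twice). In $N$ the curve $\alpha$ separates the two boundary components containing the arc $\beta_1$ from those containing $\beta_2$; that is, $\alpha$ cuts off a pair of pants with boundary $\alpha,c_{11},c_{21}$. Symmetrically, $\beta$ cuts off a pair of pants with boundary $\beta,c_{11},c_{12}$.

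Next I would classify each $c_{ij}$. None is null-homotopic: a null-homotopic $c_{ij}$ would bound an embedded disk, i.e.\ an embedded bigon cobounded by an arc of $\alpha$ and an arc of $\beta$, contradicting that the geodesics $\alpha,\beta$ are in minimal position. Hence each $c_{ij}$ either bounds a cusp or is essential, and if it is essential then its geodesic representative has length $\geq\ell$; since $c_{ij}$ has genuine corners at $p_1,p_2$ it is strictly longer than this representative, so $\ell(\alpha_i)+\ell(\beta_j)>\ell$. The labeling now gives $\ell(c_{11})=\ell(\alpha_1)+\ell(\beta_1)\leq\ell$, which rules out essential and forces $c_{11}$ to bound a cusp. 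Moreover $\ell(c_{12})+\ell(c_{21})=\bigl(\ell(\alpha_1)+\ell(\alpha_2)\bigr)+\bigl(\ell(\beta_1)+\ell(\beta_2)\bigr)=2\ell$, so $c_{12}$ and $c_{21}$ cannot both be essential, and the non-essential one (again not null-homotopic) bounds a cusp.

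To finish I would combine these. If $c_{21}$ bounds a cusp, then $c_{11}$ and $c_{21}$ bound two cusps lying on the same side of $\alpha$; together with the pair of pants that $\alpha$ cuts off (with boundary $\alpha,c_{11},c_{21}$), these form a twice-punctured disk with boundary $\alpha$, so $\alpha$ surrounds two cusps. The two cusps are distinct because they lie in different components of $S\setminus N$ (a once-punctured disk has a single boundary curve). If instead $c_{12}$ bounds a cusp, the identical argument applied to the pants with boundary $\beta,c_{11},c_{12}$ shows that $\beta$ surrounds two cusps. In either case one of $\alpha,\beta$ bounds two cusps, which is the claim.

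The step I expect to be the main obstacle is the topological one: verifying that $N$ is a four-holed sphere with boundary curves exactly the $c_{ij}$, and correctly identifying, for each of $\alpha$ and $\beta$, the pair of boundary curves with which it cobounds a pair of pants. Equivalently, one must be sure that the two cusps produced really lie on a common side of a single curve rather than being separated by it. I would make this rigorous by cutting $S$ along $\alpha\cup\beta$ and tracking the four complementary sectors around $p_1$ and $p_2$, using the type I condition from Lemma \ref{consecutive} to pin down their cyclic arrangement; the length estimates in the second paragraph are then routine.
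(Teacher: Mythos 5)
Your proof is correct and follows essentially the same route as the paper: concatenate arcs of $\alpha$ and $\beta$ into closed curves of length at most $\sys(S)$, use the corners to make the length inequality strict and the bigon criterion to rule out null-homotopy, and conclude that the resulting short curves bound cusps cobounding a pair of pants with $\alpha$ or with $\beta$. The paper avoids your case split by labelling so that $\alpha_1$ is the shortest of the four arcs, whence both $\alpha_1\cup\beta_1$ and $\alpha_1\cup\beta_2$ have length at most $\ell$ and $\beta$ immediately bounds two cusps.
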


\begin{proof}
The two curves cut each other into arcs $\alpha_1$, $\alpha_2$ and $\beta_1$, $\beta_2$. Without loss of generality, we can assume $\ell(\alpha_1)\leq\ell(\beta_1)\leq\frac{1}{2}\sys(S)$. Consider $\gamma_1=\alpha_1\cup\beta_1$ and $\gamma_2=\alpha_1\cup\beta_2$. As $\gamma_1$ and $\gamma_2$ do not surround bigons, they cannot be trivial and as they can be represented by curves of length strictly less than $\sys(S)$, they must both bound a cusp. Hence $\beta$ bounds two cusps.
\end{proof}

An obvious consequence of Proposition \ref{2cusps} is that systoles on surfaces with at most one cusp intersect at most once. In the case of tori this can be improved to show that a surface with twice intersecting systoles has at least three cusps.

\begin{lemma}
If $S\in \ms_{1,2}$ and $\alpha$, $\beta$ are systoles of $S$, then $\inter(\alpha,\beta)\leq 1$.
\end{lemma}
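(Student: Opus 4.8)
The plan is to argue by contradiction, assuming $\inter(\alpha,\beta)=2$, and to use the preceding proposition to pin down the topological type of the configuration before extracting a metric contradiction.

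First I would invoke Proposition \ref{2cusps}: if the two systoles intersect twice then one of them, say $\beta$, bounds two cusps. Since $S\in\ms_{1,2}$ has exactly two cusps, this forces $\beta$ to be separating, cutting $S$ into a pair of pants $P$ carrying both cusps and a one-holed torus $T$, glued along $\beta$. Denote by $\beta_1,\beta_2$ the two arcs into which the intersection points cut $\beta$. As $\alpha$ meets $\beta$ in exactly two points, it is cut into two arcs $\alpha_1=\alpha\cap\overline P$ and $\alpha_2=\alpha\cap\overline T$. Running the argument of Proposition \ref{2cusps} I would identify $\alpha_1$ as the short arc (of length at most $\tfrac12\sys(S)$): the curves $\alpha_1\cup\beta_1$ and $\alpha_1\cup\beta_2$ cannot be trivial and are shorter than the systole, so each bounds a cusp, which forces $\alpha_1$ to be an essential arc of $P$ that separates the two cusps, with both endpoints on $\beta$.

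The contradiction I would then aim for is purely metric. The key estimate is that in a pair of pants with two cusps and geodesic boundary $\beta$ of length $\ell=\sys(S)$, every geodesic arc with both endpoints on $\beta$ that separates the two cusps has length strictly greater than $\ell/2$. Since $\alpha_1$ is a subarc of the geodesic $\alpha$, it is length-minimizing in its homotopy class rel endpoints, and its class is cusp-separating, so $\ell(\alpha_1)$ is at least the length of the cusp-separating orthogeodesic of $P$. The estimate would then give $\ell(\alpha_1)>\tfrac12\sys(S)$, contradicting $\ell(\alpha_1)\le\tfrac12\sys(S)$.

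The hard part will be establishing this pants estimate, which is a hyperbolic trigonometry computation: using the reflection of $P$ that interchanges the two cusps and fixes the cusp-separating orthogeodesic $\eta$, one reduces to a right-angled piece and expresses $\ell(\eta)$ as an explicit function of $\ell$. The inequality is transparent when $\ell$ is small (the orthogeodesic length blows up as $\beta$ pinches, while $\ell/2\to 0$), but it must be checked on the whole feasible range. Here I expect it to help that $\beta$ is assumed to be a systole: the core curve $c_0$ of $T$ is disjoint from both $\alpha$ and $\beta$ and is essential in $S$ (it is non-separating, since $\beta$ cuts off a genuine handle), so $\ell(c_0)\ge\sys(S)$; comparing this with the geometry of the one-holed torus bounds $\ell$ from above and confines us to the range where the pants estimate holds. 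It is precisely this extra essential curve $c_0$, produced by the genus-one handle, that is absent for punctured spheres and that obstructs twice-intersecting systoles when there are only two cusps.
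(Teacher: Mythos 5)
Your proof is correct in outline and shares its skeleton with the paper's --- both rest on Proposition \ref{2cusps} plus the systole bound for a one-holed torus with boundary length $\ell$ --- but you replace one citation by a computation, and the logical order matters. The paper simply quotes \cite{GS} for the fact that twice-intersecting systoles force $\sys(S)\geq 4\arcsinh(1)$, then cuts along the cusp-bounding systole and applies $\cosh(\ell(\gamma)/2)\leq \cosh(\ell/6)+\frac{1}{2}$ (from \cite{parlier2}) to the shortest interior geodesic $\gamma$ of the one-holed torus $T$; since $\gamma$ is non-separating, hence essential in $S$, it satisfies $\ell(\gamma)\geq\ell$, and the two inequalities are incompatible. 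Your pants-side estimate is in effect a self-contained substitute for the \cite{GS} input: the cusp-separating orthogeodesic of $P$ has length exactly $2\arcsinh\left(1/\sinh(\ell/4)\right)$ (a Lambert-quadrilateral identity, cf.\ the computation in Lemma \ref{twobounds}), and this exceeds $\ell/2$ precisely when $\ell<4\arcsinh(1)$ --- so your ``key estimate'' is \emph{false} on the range $\ell\geq 4\arcsinh(1)$, exactly as you suspected, and read contrapositively it merely recovers the bound $\ell\geq 4\arcsinh(1)$. Consequently the torus half of your argument cannot be soft: the mere existence of an essential curve $c_0$ in $T$ with $\ell(c_0)\geq\ell$ gives nothing by itself, and you need a quantitative upper bound such as the one above, which pins $\ell$ below roughly $2.1<4\arcsinh(1)$ and only then lets the pants estimate bite. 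Two smaller points: the shortest geodesic of $T$ need not be disjoint from $\alpha$ (only from $\beta=\partial T$), though only its essentiality in $S$ is actually used; and the inequality $\ell(\alpha_1)\geq 2\arcsinh\left(1/\sinh(\ell/4)\right)$ requires the (standard but worth stating) remark that a geodesic arc with fixed endpoints on $\beta$ is at least as long as the free-boundary orthogeodesic of its arc class, and that among essential cusp-separating classes the untwisted one is shortest.
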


\begin{proof}
Suppose two systoles $\alpha$ and $\beta$ intersect twice. Then $\sys(S)\geq 4\arcsinh(1)$ (see \cite{GS}) and by Proposition \ref{2cusps} one of the two curves bounds two cusps. Cut the surface along $\alpha$ and consider the one-holed torus component. The length of the shortest closed geodesic $\gamma$ in the one-holed torus which doesn't intersect $\alpha$ satisfies (see \cite{parlier2}) 
$$\cosh(\ell(\gamma)/2)\leq \cosh(\ell(\alpha)/6)+\frac{1}{2}$$
and $\ell(\gamma)\geq \sys(S)=\ell(\alpha)$, so
$$\cosh(\ell(\alpha)/2)\leq \cosh(\ell(\alpha)/6)+\frac{1}{2},$$
which contradicts $\ell(\alpha)\geq 4\arcsinh(1)$.
\end{proof}

On the other hand, we can prove that for every genus there is a punctured surface with systoles intersecting twice. The constructions will involve gluing ideal hyperbolic triangles. Any such triangle has a unique maximal embedded disk tangent to all three sides. We say that two such triangles are glued {\it without shear} if their embedded disks are tangent. 

\begin{lemma}\label{2intersections}
For every $g\geq 0$, there exists $n(g){\in \mathbb{N}}$ and a surface $S\in \ms_{g,n(g)}$ with two systoles intersecting twice.
\end{lemma}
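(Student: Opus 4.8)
The plan is to produce each $S_g$ as an explicit gluing of ideal triangles \emph{without shear}, using the symmetry of such a gluing to force several short curves to be isometric, and the tangency of the inscribed disks to keep every other essential curve longer. Note first that Proposition \ref{2cusps} already dictates the shape of the target: if $\alpha,\beta$ are systoles with $\inter(\alpha,\beta)=2$, one of them cuts off a twice-punctured disk, so a neighbourhood of $\alpha\cup\beta$ is a four-holed sphere two of whose holes are cusps. I will build this four-punctured gadget first and then grow the remaining genus inside it, away from $\alpha\cup\beta$.

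For the base case $g=0$, glue four ideal triangles without shear in the pattern of the faces of a tetrahedron; the result is a four-punctured sphere $P$ with one cusp at each of the four vertices. An essential simple closed curve on $P$ separates the four cusps into two pairs, and there are three such pairings; the geodesic realising the shortest curve of each pairing meets the other two in exactly two points. Since the tetrahedral triangulation is invariant under the alternating group $A_4$ acting on the vertices, the zero-shear structure admits an isometric $A_4$-action, and the induced homomorphism $A_4\to S_3$ permutes the three pairings transitively; hence the three geodesics lie in a single isometry orbit and are isometric. Once one checks that these three curves are the shortest essential curves of $P$ (so that they are its systoles), any two of them are systoles meeting twice, settling $g=0$.

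For $g\ge 1$, I keep $P$ and replace the once-punctured disk region around one chosen cusp, say the region $R_1$ containing cusp $1$, by a hyperbolic piece of genus $g$ with a single attaching cusp, again assembled from ideal triangles glued without shear and chosen to carry a $\Z/3$-symmetry compatible with the order-three rotation $r$ of $P$ that fixes $R_1$ and cycles the other three cusps. Because the lemma allows $n(g)$ to be arbitrary, I am free to take this piece as finely triangulated and with as many extra (symmetrically arranged) cusps as needed, and to realise every genus by a suitable $\Z/3$-symmetric gadget. The surviving isometry $r$ still cyclically permutes the three separating curves, so they remain isometric; moreover each of them has its non-$R_1$ side untouched, hence still bounds two cusps, in agreement with the constraint of Proposition \ref{2cusps}. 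The intersection pattern near $\alpha\cup\beta$ is unchanged, so any two of the three curves still meet twice.

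The crux is showing that these three curves remain the \emph{shortest} essential curves after the genus is added. This is where gluing without shear earns its keep: adjacent triangles then have tangent maximal inscribed disks, all of radius $\tfrac12\log 3$, forming an embedded packing, and any essential closed geodesic must thread between the cusps across this packing, spending a definite amount of length at each edge it crosses. This yields a lower bound for length growing with the number of edges crossed; the three separating curves cross the minimal number of edges, whereas every curve running through the added genus can be made to cross arbitrarily many by refining that region, hence strictly longer for a suitable choice of triangulation (and so of $n(g)$). Making this crossing-length estimate precise, verifying that no essential curve crosses only a single edge so that the three curves genuinely attain the minimum, and exhibiting a $\Z/3$-symmetric genus-$g$ gadget for every $g$, is the main technical work; the remainder is the bookkeeping of the gluing pattern that realises genus $g$ while preserving the symmetry and the twice-intersecting configuration.
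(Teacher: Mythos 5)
Your overall strategy --- ideal triangles glued without shear, with a symmetry forcing several twice-intersecting curves to be isometric --- is the same as the paper's for $g\geq 1$ (the paper builds a genus-$g$ surface from $32$-triangle square blocks and takes the curves around pairs of cusps joined by an edge between degree-$6$ vertices). But the entire content of the lemma lies in the step you defer as ``the main technical work'': proving the designated curves are actually systoles. Worse, the heuristic you offer for that step is wrong. It is not true that an essential closed geodesic on a zero-shear surface spends ``a definite amount of length at each edge it crosses'': in the Brooks--Makover coding a geodesic with word $RL^{k}$ crosses $k+1$ edges but has length $2\arccosh\left(\frac{k+2}{2}\right)\sim 2\log k$, so length per edge crossed tends to $0$ when the curve winds around a high-degree cusp. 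So ``make curves through the genus region cross many edges by refining the triangulation'' does not yield the lower bound you need.

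Moreover, refining the triangulation is not free: every new vertex is a new cusp, and every edge of the refined triangulation joining vertices of degrees $d_1,d_2$ produces a candidate short curve with word $RL^{d_1-2}RL^{d_2-2}$. Your three target curves come from the degree-$3$ tetrahedral vertices and have word $RLRL$, trace $7$; an edge between two degree-$4$ vertices in your genus gadget already gives trace $14$ (fine), but you must rule out \emph{every} word of trace $<7$, e.g.\ $R^2L^2$ (trace $6$) or $RL^k$, $k\le 4$, arising from short circuits in the dual graph of whatever triangulation realizes the genus. These local competitors are exactly the curves your ``crossing-count'' argument does not see, and controlling them forces constraints on vertex degrees and girth of the dual graph --- which is precisely the bookkeeping the paper carries out in Lemma \ref{noshear} via the trace/subword machinery. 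Until you exhibit a concrete $\Z/3$-symmetric genus-$g$ triangulation and run that verification, the proof is incomplete at its crux. (Your $g=0$ tetrahedral construction is fine and a little more explicit than the paper's, which just deforms an arbitrary four-punctured sphere until it has two systoles.)
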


\begin{proof}
For $g=0$, we can set $n(0)=4$, as mentioned at the beginning of section \ref{sec:topology}: any four times punctured sphere with at least two systoles will satisfy the requirement. To show the existence of such a surface, pick any $S\in \ms_{0,4}$. If it has only one systole $\gamma$, increase the length of $\gamma$ so that it is still a systole and there is another simple closed geodesic on the surface of the same length.

For $g\geq 1$, we use a building block constructed as follows. Consider a square and a triangulation of it with $32$ triangles, given by first subdividing the square into a grid of $16$ squares and then adding one diagonal for all squares as in Figure \ref{hexgrid}.
\begin{figure}[H]
\begin{center}
\includegraphics[width=2.5cm]{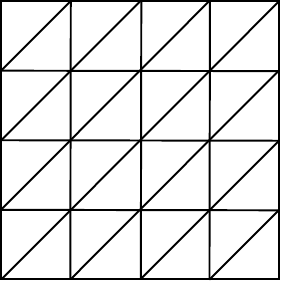}
\caption{The triangulation of the square}\label{hexgrid}
\end{center}
\end{figure}
Each of the triangles in the square will be replaced by an ideal hyperbolic triangle and all gluings will be without shear.

For $g=1$, glue opposite sides of the square (again triangles are glued without shear) to obtain a torus with $n(1)=16$ cusps.

For $g\geq 2$, consider a polygon obtained by gluing a $1\times(g-1)$ rectangle and a $1\times 2(g-1)$ rectangle along the long sides, as in Figure \ref{4ggon}.
\begin{figure}[H]
\begin{center}
\includegraphics[width=2cm]{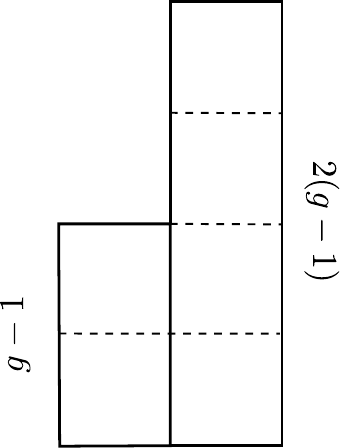}
\caption{The polygon for $g=3$}\label{4ggon}
\end{center}
\end{figure}
Think of this polygon as a $4g$-gon (with sides corresponding to sides of the squares). Fix an orientation and choose a starting side, to identify the $4g$ sides following the standard pattern $a_1b_1a_1^{-1}b_1^{-1}\dots a_gb_ga_g^{-1}b_g^{-1}$ to obtain a genus $g$ surface. If we now replace each $1\times 1$ square by the building block (always gluing adjacent triangles without shear), we get a surface of genus $g$ {with a decomposition into $32\cdot3(g-1)$ ideal triangles. Since it is a triangulation, the number of edges is $\frac{3}{2}\cdot32\cdot3(g-1)$. By an Euler characteristic argument, this implies that the surface has $n(g)=46g-46$ cusps}.

For any $g\geq 1$, consider the set $\mathcal{C}_g$ of curves surrounding pairs of cusps which are connected by an edge between vertices of degree $6$ in the triangulation of the surface. By construction, each of these intersects another such curve twice and we defer the proof that these curves are systoles to Lemma \ref{noshear}.
\end{proof}

We now prove our claim that the curves in $\mathcal{C}_g$ are indeed systoles.

\begin{lemma}\label{noshear}
For all $g\geq 1$, the curves in $\mathcal{C}_g$ are systoles.
\end{lemma}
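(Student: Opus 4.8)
The plan is to pin down the common length of the curves in $\mathcal{C}_g$ and then to show that nothing shorter lives on the surface. For the length I would work directly with the no-shear gluing. Place a standard ideal triangle in $\Hyp$ with ideal vertices $0,1,\infty$: its inscribed circle is the Euclidean circle of centre $(1/2,1)$ and radius $1/2$, tangent to the three sides at $(0,1)$, $(1,1)$ and $(1/2,1/2)$, and the horocyclic segment joining the two tangent points on the sides meeting at any one vertex has length $1$. Since all gluings are without shear, these tangent points, and hence these horocyclic segments, match across every edge. Consequently a cusp $c$ at which $d$ triangles (equivalently $d$ edges) meet carries a natural horocycle of length $d$, bounding a horoball of area $d$. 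The crucial elementary observation is that along any edge $e$ the two natural horoballs at its endpoints are tangent, since their boundary horocycles meet at the inscribed-circle tangent point of $e$. Now let $v_1,v_2$ be two cusps of degree $6$ joined by an edge, as in the definition of $\mathcal{C}_g$. Shrinking each natural horoball (area $6$) to the area-$2$ horoball $H_{v_i}$ pushes its boundary towards the cusp by $\log(6/2)=\log 3$, so that $d(H_{v_1},H_{v_2})=2\log 3$. Lemma \ref{twobounds}(a) with $r=\log 3$ then gives a surrounding simple closed geodesic of length $4\arccosh(3)=8\arcsinh(1)$, which is the common length of the curves in $\mathcal{C}_g$ and already shows $\sys(S)\le 4\arccosh(3)$.

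Running the same computation for two cusps of arbitrary degrees $d_1,d_2$ joined by an edge produces a surrounding curve of length $4\arccosh\!\big(\tfrac12\sqrt{d_1 d_2}\big)$, an increasing function of the degrees; two cusps not joined by an edge are strictly farther apart and give a strictly longer curve. So the first half of the lower bound is a purely combinatorial check that every cusp in the building-block construction has degree at least $6$: the interior vertices of the triangulated square have degree exactly $6$, and the boundary identifications forced by the torus (resp. $4g$-gon) gluing identify boundary vertices in groups whose partial degrees sum to at least $6$. Granting this, the shortest curves enclosing exactly two cusps are precisely those in $\mathcal{C}_g$. As a consistency check in the base case $(g,n)=(1,16)$ one has $\frac{6g-6+3n}{n}=3$, so Schmutz Schaller's packing bound (Theorem \ref{schmutzsys}) reads $\sys(S)\le 4\arccosh(3)$ with equality realised by $\mathcal{C}_1$; these surfaces are thus systole-extremal for that bound.

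The remaining step, and the genuinely hard one, is to rule out every \emph{other} topological type of essential curve being shorter than $4\arccosh(3)$: non-separating curves, curves carrying genus, and curves enclosing three or more cusps. The idea is that any such geodesic must cross the $1$-skeleton of the triangulation transversally and, crucially, must do so in the thick part rather than close to an ideal vertex — the delicate point being that an edge can be crossed cheaply only near a cusp, which an essential closed geodesic cannot exploit arbitrarily often. Using that the natural horoballs are pairwise tangent and that outside them each crossing contributes a definite amount of length, one bounds the length of such a curve from below, with the symmetric two-cusp configuration as the unique minimiser. I expect the main difficulty to be making this crossing cost sharp: the naive estimate (four edge-crossings, each worth the dual length $\log 3$) yields only $4\log 3$, which falls short of $4\arccosh(3)$, so the argument cannot rest on a crude per-crossing bound but must use the actual geometry of how a geodesic traverses the no-shear ideal triangles in order to land exactly on the value $4\arccosh(3)$.
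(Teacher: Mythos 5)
Your computation of the length of the curves in $\mathcal{C}_g$ is correct and is a genuinely different (and rather cleaner) route than the paper's: you exploit the fact that under no-shear gluings the natural horoball at a degree-$d$ cusp has area $d$ and that adjacent natural horoballs are tangent at the inscribed-circle tangent point of the shared edge, giving $d(H_{v_1},H_{v_2})=2\log 3$ and hence length $4\arccosh(3)$ via Lemma \ref{twobounds}(a); this agrees with the paper's value $2\arccosh(17)$ obtained from the trace of the word $RL^4RL^4$. Your treatment of curves surrounding two cusps (monotonicity in the degrees, minimum degree $6$, non-adjacent cusps strictly farther apart) is also essentially sound, modulo spelling out why the natural horoballs are embedded and pairwise disjoint away from edge-tangencies.

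However, there is a genuine gap, and you identify it yourself: the entire content of the lemma is to rule out \emph{every other} essential closed geodesic being shorter than $4\arccosh(3)$, and your third paragraph is a plan rather than a proof. You correctly observe that a per-edge-crossing cost of $\log 3$ only yields $4\log 3\approx 4.39$, well short of $4\arccosh(3)\approx 7.05$, but you do not supply the sharper tool needed to close this. The paper's proof supplies exactly that tool: on a zero-shear surface every closed geodesic is homotopic to a closed walk on the dual graph $\Gamma$, its length is $2\arccosh(\mathrm{Tr}(w)/2)$ where $w$ is the associated word in $L=\left(\begin{smallmatrix}1&1\\0&1\end{smallmatrix}\right)$ and $R=\left(\begin{smallmatrix}1&0\\1&1\end{smallmatrix}\right)$ recording left and right turns (Brooks--Makover), and the trace is monotone under passing to subwords (Remark \ref{subword}). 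This converts the geometric minimization into a finite combinatorial one: words containing $L^4$ or $R^4$ either give curves in $\mathcal{C}_g$ or can be shortened by a local surgery; words built from four or more blocks $L^iR^j$ with $1\leq i,j\leq 3$ already have trace at least $\mathrm{Tr}((LR)^4)>34$; and the remaining finitely many short words are checked by hand. Without this (or an equivalent quantitative accounting of how a geodesic traverses the ideal triangles), the claim that nothing non-separating or genus-carrying undercuts $4\arccosh(3)$ is unproved, so the proposal does not establish the lemma.
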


\begin{proof}
Consider the triangulation of the surface. For $g=1$, all vertices are of degree $6$. When $g\geq 2$, the pasting scheme associates all exterior vertices of the $4g$-gon and the point in the quotient has degree $12g-6$ {(to see this simply apply the hand-shaking lemma to the graph given by the triangulation)}. The remaining vertices are all of degree $6$. We denote by $\Gamma$ the graph dual to the triangulation. From what we have just said, for $g=1$, cutting the surface along $\Gamma$ decomposes the surface into hexagons. When $g\geq 2$, cutting along $\Gamma$ decomposes the surface into  hexagons and a single $(12g-6)$-gon.

Any simple closed oriented geodesic $\gamma$ on the surface can be homotoped to a curve on $\Gamma$. At every vertex crossed by the curve, the orientations on the surface and on the curve give us a notion of ``going left'' or ``going right''. We can associate to $\gamma$ a word $w$ in the matrices $L=\left(\begin{tabular}{cc}1&1\\0&1\end{tabular}\right)$ and $R=\left(\begin{tabular}{cc}1&0\\1&1\end{tabular}\right)$, where each $L$ corresponds to a left turn and each $R$ to a right turn. This way of understanding curves on ``zero shear surfaces" is fully explained in \cite{brooksmakover}. In particular, Brooks and Makover show how to compute the length of these curves in terms of the associated word:
$$\ell(\gamma)=2\arccosh\left(\frac{\scriptsize\mbox{Tr}(w)}{2}\right).$$

Each curve in $\mathcal{C}_g$ corresponds to the word $w_0=RL^4RL^4$ (or $LR^4LR^4$ or any cyclic permutation of these, depending on the choice of an orientation and a starting point on the curve), which via a simple computation has trace $34$. To show that the curves in $\mathcal{C}_g$ are systoles, it is enough to show that all other words corresponding to simple closed geodesics have trace at least $34$.

We use the following remark (see for instance \cite{petri}):
\begin{rmk}\label{subword}
If a word can be written as $w=\dots w_1\dots w_2\dots w_k\dots$, then
$$\mbox{Tr}(w)\geq \mbox{Tr}\left(w_{\sigma(1)}\dots w_{\sigma(k)}\right)$$
for any cyclic permutation $\sigma$ of $1,\dots , k$.
\end{rmk}

Let $\gamma$ be a simple closed geodesic which is not in $\mathcal{C}_g$. First we observe that we only need to consider curves represented by circuits in $\Gamma$. Indeed, if $\gamma$ corresponds to a closed path which contains an essential (i.e. not corresponding to a curve going around a cusp) circuit $\gamma'$, a word representing $\gamma$ will contain a word representing $\gamma'$. By Remark \ref{subword}, $\gamma'$ is at most as long as $\gamma$ and we can consider $\gamma'$ instead. Otherwise, if $\gamma$ is formed from non-essential circuits, it should contain at least two of them. Note that since non-essential circuits surround a cusp, they trace a hexagon or a $(12g-6)$-gon. If both these circuits surround hexagons, we are in one of the following situations:
\begin{figure}[H]
\begin{center}
\includegraphics{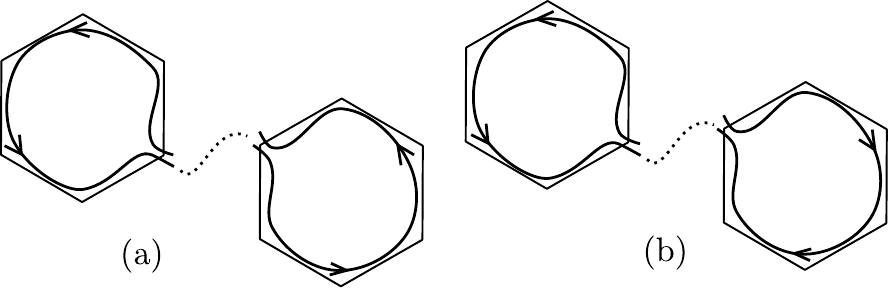}
\end{center}
\end{figure}
In case (a), a word associated to the curve contains $RL^5\dots RL^5$ and in case (b) it contains $LR^5\dots RL^5$. In both cases, by Remark \ref{subword} and a simple computation, their traces are bigger than $34$. Now if one of the two circuits surrounds the vertex of the triangulation of degree $12g-6$, the curve is even longer.

Suppose then that $\gamma$ is represented by an essential circuit. If it passes through five consecutive edges of a hexagon (said differently, a corresponding word contains $L^4$) and is not in $\mathcal{C}_g$, the following modification of the curve (see Figure \ref{shortening}) provides an essential circuit.
\begin{figure}[H]
\begin{center}
\includegraphics{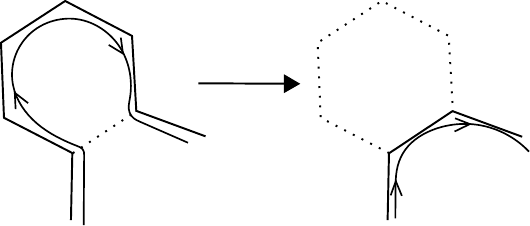}
\caption{Shortening a curve}\label{shortening}
\end{center}
\end{figure}
A word of the curve on the left contains $LR^4L$, while the one of the curve on the right contains $R^2$, so the trace decreases (again by Remark \ref{subword}) and we obtain a shorter curve.

We now assume a word $w$ representing $\gamma$ does not contain $L^ 4$ or $R^ 4$ and as such it is made of blocks of type $L^ iR^ j$, for $1\leq i,j\leq 3$. If $w$ is made of four or more such blocks, then
$$\mbox{Tr}(w)\geq \mbox{Tr}((LR)^ 4)>34.$$
Moreover, the length of $w$ is at least $7$, as the shortest circuits in $\Gamma$ are of length $6$ and correspond to curves surrounding cusps. With this in hand, one needs to check the finite set of words $w$ made of blocks as above, of length at least $7$, and of trace at most $33$. To do this one can proceed as follows. The conditions on $w$ give two systems of equations for the exponents of $L$ and $R$ (a system for the words made of two blocks as above and one for words made of three blocks). These systems can be solved to get the set of words we are interested in. It is then straightforward to check that the curves corresponding to these words do not correspond to simple closed geodesics on the surface. 
\end{proof}

\section{Kissing number bound}
In this section we will prove an upper bound for the kissing number depending on systole length. We then deduce a universal upper bound depending only on the signature of the surface. To do so, we separate the systoles into three sets and we give separate bounds for each of their cardinalities.

For a surface $S$, let $\mathfrak{S}(S)$ be the set of its systoles and $\kiss(S):=|\mathfrak{S}(S)|$ be the {\it kissing number} of $S$. We say that $\alpha$ and $\beta$ bound a cusp if they form a pair of pants with a cusp. We define:
\begin{align*}
A(S)&:=\{\alpha\in \mathfrak{S}(S)\st\alpha \mbox{ bounds two cusps}\}\\
B(S)&:=\{\alpha\in \mathfrak{S}(S)\setminus A(S)\st\exists \beta\in \mathfrak{S}(S)\setminus A(S) \mbox{ s.t. $\alpha$ and $\beta$ bound a cusp}\}\\
C(S)&:=\mathfrak{S}(S)\setminus(A(S)\cup B(S)).
\end{align*}

Note that by Proposition \ref{2cusps} two systoles in $\mathfrak{S}(S)\setminus A(S)$ intersect at most once.

\subsection{Bounds on $|A(S)|$}

As seen in Lemma \ref{twobounds}, a curve of length $\ell$ bounds two cusps $c$ and $c'$ if and only if the distance between $H_c$ and $H_{c'}$ is
$$d(\ell)=2\log(\cosh(\ell/4)),$$
To bound $|A(S)|$ we will bound the number of pairs of cusps at distance $d(\sys(S))$.
\begin{lemma} Let $S$ be a surface with $\sys(S)=\ell$ and $c$ a cusp of $S$. There are at most $\lfloor 2\cosh(\ell/4)\rfloor$ cusps $c'$ which satisfy $d(H_c,H_{c'})=d(\ell)$.
\end{lemma}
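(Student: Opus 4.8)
The plan is to pass to the universal cover, normalise the cusp $c$ to infinity, and count the relevant cusps $c'$ as tangency points of horoballs along a horocycle around $c$. First I would work in the upper half-plane model with $c$ lifted to $\infty$; since $H_c$ has area $2$, I may conjugate so that $H_c$ lifts to $\{y>1\}$ and the parabolic subgroup fixing $\infty$ is generated by $z\mapsto z+2$. Setting $r=\log(\cosh(\ell/4))$, so that $e^r=\cosh(\ell/4)$, the condition $d(H_c,H_{c'})=d(\ell)=2r$ means precisely that $D_r(c)$ and $D_r(c')$ are tangent; the boundary horocycle $\partial D_r(c)$ then has length $2e^r=2\cosh(\ell/4)$, which is exactly the quantity controlling the bound.

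Next, each cusp $c'$ with $d(H_c,H_{c'})=d(\ell)$ contributes, through a nearest lift, a horoball $D_r(c')$ which in the model is a Euclidean disk of diameter $e^{-r}$ tangent to the horocycle $\{y=e^{-r}\}$ at a point $(\xi_{c'},e^{-r})$ with $\xi_{c'}\in\R$. The heart of the argument is a disjointness claim: for two distinct such cusps $c'$ and $c''$ the regions $D_r(c')$ and $D_r(c'')$ have disjoint interiors. Indeed, an overlap would force $d(H_{c'},H_{c''})<2r$, and Lemma~\ref{twobounds}(a) applied to the pair $c',c''$ would then produce a simple closed geodesic surrounding them of length $4\arccosh(e^{d(H_{c'},H_{c''})/2})<4\arccosh(e^{r})=\ell$. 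This geodesic is essential, contradicting the minimality of the systole.

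I would then convert disjointness into a packing count. Two disks of diameter $e^{-r}$ tangent to the same horocycle have disjoint interiors only if their tangency points satisfy $|\xi_{c'}-\xi_{c''}|\ge e^{-r}$. The whole configuration is invariant under $z\mapsto z+2$, and distinct cusps give points $\xi_{c'}$ in distinct orbits, so after reducing modulo $2$ the tangency points become distinct points on a circle of circumference $2$ whose pairwise cyclic distances are all at least $e^{-r}$. Summing the consecutive gaps (which total $2$) shows the number of such points is at most $\lfloor 2/e^{-r}\rfloor=\lfloor 2e^r\rfloor=\lfloor 2\cosh(\ell/4)\rfloor$, the claimed bound.

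The step I expect to be the main obstacle is the disjointness claim together with the bookkeeping of lifts. One must make sure that the relevant neighbourhood is $D_r(c')$, of diameter $e^{-r}$, rather than the area-$2$ horoball $H_{c'}$, of diameter $e^{-2r}$, since only the former produces the separation $e^{-r}$ yielding the sharp constant $2\cosh(\ell/4)$; and one must verify that distinct cusps really do give distinct, non-equivalent tangency points, so that no two cusps are collapsed into a single count.
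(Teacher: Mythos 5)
Your proof is correct, and it reaches the right separation constant. It takes a genuinely different computational route from the paper's. The paper stays on the surface: given two cusps $c_1,c_2$ both at distance $d(\ell)$ from $H_c$, it uses the fact that $d(H_{c_1},H_{c_2})\geq d(\ell)$ (forced by the systole, exactly as in your disjointness claim) and then runs a trigonometric computation on the non-geodesic hexagon bounded by the two perpendiculars, the geodesic arc joining $H_{c_1}$ to $H_{c_2}$, and three horocyclic arcs, to conclude that the arc of $\partial H_c$ between the feet of the two perpendiculars has length at least $1/\cosh(\ell/4)$; since $\partial H_c$ has length $2$, the count follows. You instead lift to $\Hyp$, normalize $c$ to $\infty$ with $H_c=\{y>1\}$ and parabolic $z\mapsto z+2$, and replace the hexagon computation by the elementary Euclidean fact that two disjoint disks of diameter $e^{-r}$ tangent to $\R$ at $\xi_{c'},\xi_{c''}$ force $|\xi_{c'}-\xi_{c''}|\geq e^{-r}=1/\cosh(\ell/4)$ -- which is precisely the same separation along the length-$2$ horocycle, obtained by tangent-circle geometry rather than hyperbolic trigonometry. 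Your derivation of disjointness from Lemma \ref{twobounds}(a) (overlap would yield a curve surrounding $c'$ and $c''$ of length $<4\arccosh(e^r)=\ell$) is the same underlying mechanism the paper uses to assert $d(H_{c_1},H_{c_2})\geq d(\ell)$, and your bookkeeping of lifts (one tangency point per cusp, all $2\Z$-translates of distinct cusps' disks disjoint, hence well-separated points on $\R/2\Z$) is sound. What your version buys is that the key inequality becomes a one-line Euclidean computation, at the cost of the universal-cover setup; the paper's version avoids lifts entirely but leaves the hexagon estimate as an unspelled "direct computation."
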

\begin{proof}
Suppose $c_1$ and $c_2$  two cusps such that
$$d(H_c,H_{c_1})=d(H_{c},H_{c_2})=d(\ell).$$
Since $\sys(S)=\ell$, the distance between $H_{c_1}$ and $H_{c_2}$ is at least $d(\ell)$. Consider
 \begin{itemize}[itemsep=2ex,leftmargin=0.5cm]
\item the segment $\alpha$ realizing the distance between $H_{c}$ and $H_{c_1}$,
\item the segment $\beta$ realizing the distance between $H_{c}$ and $H_{c_2}$,
\item the shortest arc $\gamma$ of $\partial H_c$ bounded by the endpoints of $\alpha$ and $\beta$.
\end{itemize}
Let $\delta$ be the unique geodesic segment freely homotopic with endpoints on $\partial H_{c_1}$ and $\partial H_{c_2}$ to the curve $\alpha\cup \beta\cup \gamma$. Then its length is at least $d(\ell)$.
\begin{figure}[H]
\begin{center}
\includegraphics{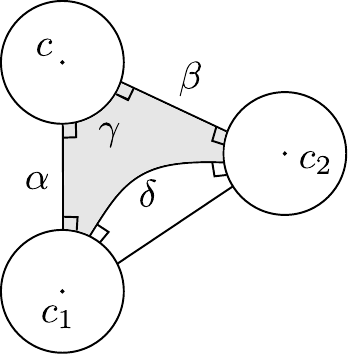}
\caption{}\label{3cusps}
\end{center}
\end{figure}
By a direct computation on the (non-geodesic) hexagon determined by $\alpha$, $\beta$, $\delta$ and the three horocycles, one can show that
$$\ell(\gamma)\geq\frac{1}{\cosh(\ell/4)}.$$
Since $\partial H_c$ has length $2$, the number of cusps around $c$ at distance $d(\ell)$ is bounded above by
$$\frac{2}{\frac{1}{\cosh(\ell/4)}},$$
which proves the claim as we are bounding an integer.
\end{proof}
As a consequence, we get the following.
\begin{prop}\label{boundA}
For $S\in\ms_{g,n}$ with $\sys(S)=\ell$
$$|A(S)|\leq \frac{n}{2}\lfloor 2\cosh(\ell/4)\rfloor.$$
\end{prop}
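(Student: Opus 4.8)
The plan is to treat this proposition as essentially a corollary of the preceding lemma: the real content there is the hexagon estimate $\ell(\gamma)\geq 1/\cosh(\ell/4)$, and once that bound on the number of cusps at distance $d(\ell)$ from a fixed cusp is available, all that remains is to match the systoles in $A(S)$ with pairs of cusps and perform a double count. Concretely, let $\mathcal{P}$ denote the set of unordered pairs of cusps $\{c,c'\}$ with $d(H_c,H_{c'})=d(\ell)$. I would produce an injection $A(S)\hookrightarrow\mathcal{P}$ and then bound $|\mathcal{P}|$ using the lemma.

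To build the map, I send each $\alpha\in A(S)$ to the unordered pair of cusps $\{c,c'\}$ with which it cobounds a pair of pants. To see this pair lies in $\mathcal{P}$, I would invoke the relation recalled at the start of the subsection: since $\alpha$ has length $\ell$ and bounds $c$ and $c'$, the two horoballs sit at distance exactly $d(\ell)$ inside that pair of pants. I then need this to equal the genuine surface distance. If there were a strictly shorter path between $H_c$ and $H_{c'}$ in $S$, the boundary of a regular neighbourhood of the two cusps joined by that path would be an essential curve surrounding $c$ and $c'$ of length strictly less than $\ell$, contradicting $\sys(S)=\ell$. Hence $d(H_c,H_{c'})=d(\ell)$ and the map lands in $\mathcal{P}$.

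Injectivity is the one point requiring care. If two distinct systoles $\alpha,\alpha'$ cobounded pairs of pants with the same pair $\{c,c'\}$, then both would be shortest essential curves surrounding $c$ and $c'$, any surrounding curve being essential and hence of length at least $\ell$. I would argue that the shortest such surrounding geodesic is unique: it is the boundary of a regular neighbourhood of $c\cup\sigma\cup c'$, where $\sigma$ is the shortest orthogeodesic between $H_c$ and $H_{c'}$, and any competing pair of pants has width at least $d(H_c,H_{c'})$ and thus a strictly longer boundary geodesic. This forces $\alpha=\alpha'$.

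Finally, I would bound $|\mathcal{P}|$ directly from the preceding lemma: for each of the $n$ cusps $c$ there are at most $\lfloor 2\cosh(\ell/4)\rfloor$ cusps $c'$ with $d(H_c,H_{c'})=d(\ell)$, so the number of ordered such pairs is at most $n\lfloor 2\cosh(\ell/4)\rfloor$, and each unordered pair is counted twice. Combining with the injection gives $|A(S)|\leq|\mathcal{P}|\leq\tfrac{n}{2}\lfloor 2\cosh(\ell/4)\rfloor$. The main obstacle is precisely the injectivity step, which ultimately rests on the uniqueness of the distance-realizing orthogeodesic between the two horoballs (the degenerate case of several minimal orthogeodesics being the only thing one must rule out); everything else is a routine double count on top of the preceding lemma.
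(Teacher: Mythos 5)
Your overall strategy is the same as the paper's: a double count over cusps using the preceding lemma, with the factor $\tfrac{n}{2}$ coming from the fact that each curve in $A(S)$ is attached to two cusps. The extra verification that $d(H_c,H_{c'})$ genuinely equals $d(\ell)$ (and not something smaller) is correct and worth making explicit.

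The problem is the injectivity step, and you have put your finger on exactly the point where your argument does not close. You want $\alpha\mapsto\{c,c'\}$ to be injective, and you try to get this from uniqueness of the shortest geodesic surrounding $c$ and $c'$; but your own justification only yields that a competing pair of pants has width \emph{at least} $d(H_c,H_{c'})$, hence boundary length at least $\ell$ --- which is no contradiction, since $\ell$ is the systole length. Uniqueness of the minimal orthogeodesic between $H_c$ and $H_{c'}$ is simply false in general (a surface with an isometry fixing $c$ and $c'$ can have two distinct minimal orthogeodesics in different homotopy classes, giving two distinct geodesics of length $\ell$ each bounding a pair of pants with $c$ and $c'$), so the injection into unordered pairs of cusps cannot be established this way. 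The correct repair is to count not cusps (or pairs of cusps) but \emph{feet of orthogeodesic segments} on $\partial H_c$: each $\alpha\in A(S)$ bounding $\{c,c'\}$ contains in its pair of pants a unique orthogeodesic segment of length $d(\ell)$ from $H_c$ to $H_{c'}$, distinct systoles give non-homotopic segments and hence distinct feet, and the hexagon estimate $\ell(\gamma)\geq 1/\cosh(\ell/4)$ in the proof of the preceding lemma applies verbatim to any two such segments emanating from $H_c$ (whether they end on the same cusp or on different ones, using parts (a) and (b) of Lemma \ref{twobounds} respectively). Since $\partial H_c$ has length $2$, each cusp carries at most $\lfloor 2\cosh(\ell/4)\rfloor$ such feet, each systole in $A(S)$ contributes exactly two feet, and the bound follows with no injectivity claim needed. (The paper's two-line proof elides this multiplicity issue entirely; you were right to worry about it, but wrong about how to resolve it.)
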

\begin{proof}
There are $n$ cusps, each of which can be surrounded by at most $\lfloor 2\cosh(\ell/4)\rfloor$ cusps at distance $d(\ell)$. The result follows as each curve surrounds two cusps. 
\end{proof}

\begin{rmk}\label{rmk:eulerA}
We can get another upper bound for $A(S)$ using the Euler characteristic as follows.

Consider the set of punctures: if there is a systole bounding two of them, we join them with a simple geodesic lying in the pair of pants determined by the systole. We complete this set of geodesics into an ideal triangulation (decomposition into ideal triangles) of the surface. The number of vertices of the triangulation is the number of punctures $n$. If $e$ is the number of edges, the number of triangles is $\frac{2e}{3}$. The Euler characteristic of the compactified surface is $2g-2$, so
$$n-e+\frac{2e}{3}=2-2g.$$
From how we constructed the triangulation, it is clear that $|A(S)|\leq e$, so we get
$$|A(S)|\leq 3(n+2g-2).$$
Interestingly, this bound can also be seen as a corollary of the above proposition. If we use Schmutz Schaller's upper bound on systole length (Proposition \ref{schmutzsys}) in Proposition \ref{boundA} above, this is exactly the resulting bound.
\end{rmk}

For surfaces of genus at least one, we will use the bound from remark above, but for punctured spheres, we will use Proposition \ref{boundA} directly.

\subsection{Bounds on $|B(S)|$}

Consider a cusp $c$; we define two associated sets:
$$B(c):=\{\alpha\in B(S)\st\exists\beta\in B(S)\mbox{ s.t. $\alpha$ and $\beta$ bound $c$}\}$$
and
$$B(c)^{(2)}:=\{(\alpha,\beta)\in B(S)\times B(S)\st\mbox{ $\alpha$ and $\beta$ bound $c$}\}.$$
Suppose $(\alpha,\beta)$, $(\gamma,\delta)\in B(c)^{(2)}$. Then $\gamma$ has to pass through the pair of pants given by $\alpha$, $\beta$ and $c$, so $\gamma$ must intersect $\alpha$ or $\beta$. Since curves in $\mathfrak{S}(S)\setminus A(S)$ pairwise intersect at most once, $\inter(\alpha,\gamma)=\inter(\beta,\gamma)=1$ (and the same for $\delta$).

Any curve $\alpha\in B(c)$ is at a fixed distance $D(\ell)$ from $H_c$. By a direct computation in the pair of pants bounded by $\alpha$ and $\beta$, one obtains
$$D(\ell)=\log\left(2\frac{\cosh(\ell/2)}{\sinh(\ell/2)}\right).$$

When curves in $B(S)$ intersect they do so exactly once, and we can obtain a lower bound on their angle of intersection of curves in $B(S)$. (Note that the lemma holds for any pair of systoles that intersect once.)

\begin{lemma}\label{boundangle} Let $S$ be a surface of signature $(g,n)\neq (1,1)$. If $\alpha$ and $\beta$ are systoles of length $\ell$ intersecting once, their angle of intersection satisfies
$$\sin\angle(\alpha,\beta)\geq \sin\theta_\ell:=
\left\{\begin{array}{ll}
\frac{2}{\sqrt{5}},& \ell<2\arccosh(3/2)\\
\frac{\sqrt{2\cosh(\ell/2)+1}}{\cosh(\ell/2)+1},&\ell\geq 2\arccosh(3/2).
\end{array}\right.$$
In particular, the angle of intersection is bounded below by a function $\theta_\ell$ that behaves like $e^{-\ell/4}$ as $\ell$ goes to infinity.
\end{lemma}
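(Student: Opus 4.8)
The plan is to bound the intersection angle $\angle(\alpha,\beta)$ from below by extracting a short closed geodesic whenever the angle is too small, and then invoking the systole minimality. The key geometric object is the following: when two geodesics $\alpha$ and $\beta$ of common length $\ell$ meet at a single point $p$ with angle $\theta$, the concatenation of the two "halves" emanating from $p$ (or some natural closed loop built from arcs of $\alpha$ and $\beta$) can be homotoped to a closed geodesic whose length is controlled by $\ell$ and $\theta$. As $\theta\to 0$ this length must shrink, forcing a contradiction with $\sys(S)=\ell$ unless the resulting curve is inessential or bounds a cusp. So I expect the proof to proceed by building a comparison triangle or quadrilateral in $\Hyp$ and computing.

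\emph{First} I would pass to the universal cover $\Hyp$ and lift the picture at the intersection point $p$: I would lift $\alpha$ and $\beta$ to geodesics $\tilde\alpha$, $\tilde\beta$ through a lift $\tilde p$, meeting at angle $\theta$. Since $\alpha$ and $\beta$ are closed of length $\ell$, the deck transformations $g_\alpha$, $g_\beta$ translating along $\tilde\alpha$, $\tilde\beta$ have translation length $\ell$. The element $g_\alpha g_\beta^{-1}$ (or a similar product) represents a closed curve on $S$ geometrically built from one arc of $\alpha$ and one of $\beta$; its translation length is computed from the trace, which in turn is a function of $\ell$ and $\theta$ via the standard formula for the composition of two hyperbolic translations along axes crossing at a point. \emph{Then} the systole condition $\ell(g_\alpha g_\beta^{-1})\ge \ell$ (provided the curve is essential and not cusp-bounding) yields the inequality $\sin\angle(\alpha,\beta)\ge \sin\theta_\ell$ after solving for $\theta$. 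The two regimes in the statement --- the constant $2/\sqrt 5$ for small $\ell$ and the explicit expression for large $\ell$ --- should emerge from the fact that for short $\ell$ the binding constraint comes from a \emph{different} candidate short curve (one passing through the pair of pants, using the once-bounding-a-cusp structure recorded by $D(\ell)$), while for long $\ell$ the crossing-arc curve dominates. The threshold $\ell = 2\arccosh(3/2)$ is exactly where the two bounds agree.

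The \emph{main obstacle}, and where the hypothesis $(g,n)\neq(1,1)$ enters, is ensuring that the short curves produced are genuinely essential and do not merely bound a cusp: if they were inessential one could not conclude a contradiction with minimality. I would handle this by the same bigon/homology argument used in Lemma~\ref{consecutive} and Proposition~\ref{2cusps} --- a curve meeting $\alpha$ exactly once is non-trivial in homology and hence essential --- while the excluded signature $(1,1)$ is precisely the degenerate case where the constructed curve can fail to be essential (there the two systoles can cross at an angle too small for the bound). The asymptotic claim $\theta_\ell\sim e^{-\ell/4}$ is then a routine expansion of $\sqrt{2\cosh(\ell/2)+1}/(\cosh(\ell/2)+1)$, since the numerator grows like $e^{\ell/4}$ and the denominator like $e^{\ell/2}$, giving decay $e^{-\ell/4}$.
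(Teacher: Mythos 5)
Your core mechanism for the main regime is sound and genuinely different from the paper's. The paper works inside the one-holed torus $T$ determined by $\alpha\cup\beta$: it invokes the systole bound for one-holed tori, $\cosh(\ell/2)\leq\cosh(\ell(\delta)/6)+\tfrac12$ where $\delta=\partial T$, cuts $T$ along $\alpha$, bounds the width $h$ of the resulting pair of pants, and converts that into an angle bound via $\sinh(h/2)=\sin\angle(\alpha,\beta)\sinh(\ell/2)$. Your route instead imposes $\ell(g_\alpha g_\beta^{-1})\geq\ell$ and uses the trace formula $\cosh\bigl(\ell(g_\alpha g_\beta^{-1})/2\bigr)=\cosh^2(\ell/2)-\sinh^2(\ell/2)\cos\theta$; solving gives $\cos\theta\leq\cosh(\ell/2)/(\cosh(\ell/2)+1)$ and hence exactly $\sin\theta\geq\sqrt{2\cosh(\ell/2)+1}/(\cosh(\ell/2)+1)$. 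Your essentiality argument for $\alpha\beta^{-1}$ (algebraic intersection $\pm1$ with $\alpha$, so it is neither trivial nor cusp-bounding) is correct, so for $\ell\geq2\arccosh(3/2)$ your proof works and is arguably cleaner than the paper's.

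However, there are genuine gaps in the rest. First, the constant $2/\sqrt5$ for $\ell<2\arccosh(3/2)$ does \emph{not} follow from your trace inequality: on that range $\cosh(\ell/2)\in(1,3/2)$ and $\sqrt{2\cosh(\ell/2)+1}/(\cosh(\ell/2)+1)$ only takes values in $(4/5,\sqrt3/2)$, all strictly below $2/\sqrt5\approx0.894$. Your claim that the two bounds agree at the threshold is false ($4/5\neq2/\sqrt5$; the stated bound jumps \emph{up} there), and your proposed mechanism for the small-$\ell$ case (``a different candidate curve using the cusp-bounding structure recorded by $D(\ell)$'') is not the right one. The actual input is that the boundary $\delta$ of the one-holed torus is an essential simple closed geodesic, so $\ell(\delta)\geq\ell$; feeding this into the pair-of-pants width computation gives $\sin^2\theta\geq\bigl(2(\cosh(\ell/2)-1)^2(\cosh(\ell/2)+1)\bigr)^{-1}$, which is decreasing and equals $4/5$ at the threshold, whence $2/\sqrt5$. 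Second, this also corrects your account of where $(g,n)\neq(1,1)$ enters: it is not that $\alpha\beta^{-1}$ could fail to be essential on the once-punctured torus (it cannot), but that for $(1,1)$ the complement of $T$ is a punctured disk, so $\delta$ is a cusp rather than a geodesic and the inequality $\ell(\delta)\geq\ell$ is unavailable. Your asymptotic expansion $\theta_\ell\sim e^{-\ell/4}$ is fine.
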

Note that also \cite[Lemma 2.4]{parlier} gives a lower bound on the angle of intersection, with the same order of growth.
\begin{proof}
Consider the two systoles and the one holed torus $T$ they determine. Since $(g,n)\neq(1,1)$, the boundary component $\delta$ of $T$ is a simple closed geodesic. 

As $\alpha$ and $\beta$ are systoles of $S$, they are also systoles of $T$. As such they satisfy the systole bound for $T$ which depends on the length of $\delta$, namely
$$\cosh(\ell(\delta)/6)\geq \cosh(\ell)-\frac{1}{2}.$$

We first consider the case when $\ell\geq 2\arccosh(3/2)$. We have $\cosh(\ell/2)-\frac{1}{2}\geq 1$ and the condition stated above is non empty. 
Cut $T$ along $\alpha$ and consider the shortest curve $h$ connecting the two copies of $\alpha$.
\begin{figure}[H]
\begin{center}
\includegraphics{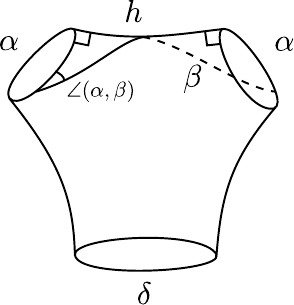}
\caption{The result of cutting the torus $T$ along $\alpha$}\label{angle}
\end{center}
\end{figure}
By hyperbolic trigonometry, using $\cosh(\ell(\delta)/6)\geq \cosh(\ell/2)-\frac{1}{2}$, a direct computation provides
$$\cosh(h)\geq\frac{4\cosh(\ell/2)^2-\cosh(\ell/2)-1}{\cosh(\ell/2)+1}.$$
Now consider one of the two right-angled triangles determined by arcs of $\alpha$, $\beta$ and $h$. We have
$$\frac{\sinh(h/2)}{\sin(\angle(\alpha,\beta))}=\sinh(\ell/2)$$
which, together with the estimate on $h$, yields
$$\sin\angle(\alpha,\beta)\geq \frac{\sqrt{2\cosh(\ell/2)+1}}{\cosh(\ell/2)+1}.$$

If $\ell< 2\arccosh(3/2)$, we deduce the inequality $\sin\angle(\alpha,\beta)\geq\frac{2}{\sqrt{5}}$ by arguing as above, but replacing the estimate $\cosh(\ell(\delta)/6)\geq \cosh(\ell/2)-\frac{1}{2}$ by $\ell(\delta)\geq\ell$.
\end{proof}

Fix $(\alpha,\beta)\in B(c)^{(2)}$ and denote by $\mathcal{P}$ the pair of pants they determine with $c$. As they form a pair of pants with two boundary curves of the same length, there is an isometric involution $\varphi$ of $\mathcal{P}$ that sends $\alpha$ to $\beta$ (a rotation of angle $\pi$ around the cusp). Note that for any $(\gamma,\delta)\in B(c)^{(2)}$, the involution sends $\gamma\cap\mathcal{P}$ to $\delta\cap\mathcal{P}$ because of the symmetry of the pair of pants determined by $\gamma$, $\delta$ and $p$. If we quotient $\mathcal{P}$ by $\varphi$ and we consider the image of $B(c)$, we get a set of geodesics at distance $D(\ell)$ from a horoball of area $1$, all pairwise intersecting with angle at least $\theta_\ell$. This observation is crucial to show the following result.

\begin{lemma}
If $(g,n)\neq (1,1)$, the number of elements in $B(c)$ is bounded above by 
$$
m(\ell):= \frac{\cosh(\ell/2)}{\sinh(\ell/2)}\frac{2}{\sin(\theta_\ell/2)}.
$$
\end{lemma}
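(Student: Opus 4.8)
The plan is to bound the number of geodesics in the quotient picture using a length/area or angular counting argument on the boundary horocycle. Recall the setup: after quotienting the pair of pants $\mathcal{P}$ by the involution $\varphi$, the image of $B(c)$ becomes a collection of geodesics, all at distance $D(\ell)$ from a horoball $H$ of area $1$, and any two of them intersect at an angle of at least $\theta_\ell$. Since each such geodesic arc must cross the ``collar'' region between $H$ and the distance-$D(\ell)$ horocycle, I would first associate to each geodesic in this family the arc (or point) where it comes nearest the cusp, and record the directions in which these geodesics emanate. The key geometric fact is that each geodesic, sitting at distance $D(\ell)$ from $H$, subtends a definite arc on the horocycle $\partial H$, and the lower bound $\theta_\ell$ on pairwise intersection angles forces these arcs (or the associated directions) to be separated.

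The central computation I would carry out is the following. First I would determine how long an arc of the horocycle $\partial H$ is ``shadowed'' by a single geodesic at distance $D(\ell)$: this is a direct hyperbolic-trigonometry computation using the known value $D(\ell)=\log\bigl(2\cosh(\ell/2)/\sinh(\ell/2)\bigr)$, and I expect it to contribute the factor $\cosh(\ell/2)/\sinh(\ell/2)$ appearing in $m(\ell)$. Second, I would translate the angular separation $\theta_\ell$ between any two geodesics into a minimal separation between their ``footprints'' on the horocycle; this is where the factor $1/\sin(\theta_\ell/2)$ should arise, the half-angle reflecting that we have passed to the quotient. Dividing the total length of the horocycle (which is $1$ after the quotient, or $2$ before, depending on normalization) by the minimal separation then yields the stated bound $m(\ell)$.

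More concretely, I would fix the horoball $H$ of area $1$ in the upper half-plane, normalized so that $\partial H$ is the line $y=1$ and the cusp is at infinity, and I would parametrize each geodesic in the family by its two endpoints on the real axis. A geodesic at distance $D(\ell)$ from $H$ is a Euclidean semicircle of a fixed radius $\rho=\rho(\ell)$; its horizontal extent on $\partial H$ is then a computable interval. Two such semicircles meeting at angle $\geq\theta_\ell$ must have centers separated by at least some explicit distance, and I would convert this into the requirement that their projections to $\partial H$ overlap by at most a controlled amount, so that the number of geodesics is at most the length of $\partial H$ divided by the net horizontal advance per geodesic. Assembling the two trigonometric estimates gives the product $m(\ell)=\frac{\cosh(\ell/2)}{\sinh(\ell/2)}\cdot\frac{2}{\sin(\theta_\ell/2)}$.

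The main obstacle, I expect, is making the packing argument rigorous: specifically, ensuring that the angular separation $\theta_\ell$ genuinely translates into a \emph{disjoint} or nearly-disjoint allocation of horocyclic length to each geodesic, rather than merely a pairwise constraint that could in principle allow overlapping footprints. The subtlety is that the geodesics all intersect pairwise (they are systoles through a common combinatorial configuration), so their footprints on $\partial H$ are not disjoint intervals but overlapping ones; the correct bookkeeping is to order them by the position of their nearest point to the cusp and bound the angular advance from one to the next using Lemma \ref{boundangle}. Getting the half-angle factor $\sin(\theta_\ell/2)$ rather than $\sin\theta_\ell$ correct — which reflects the involution quotient and the fact that consecutive geodesics differ by at least half the minimal angle in the relevant sense — will require care, and this is the step where I would be most cautious about constants.
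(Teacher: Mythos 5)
Your plan is essentially the paper's proof: work in the quotient $\mathcal{P}/\varphi$, cyclically order the geodesics by their closest approach to the cusp, and use the angle lower bound $\theta_\ell$ to allocate to each consecutive pair a disjoint subarc of the length-one horocycle bounding the quotient horoball, of length at least $\frac{\sinh(\ell/2)}{\cosh(\ell/2)}\sin(\theta_\ell/2)$ -- which is exactly the pentagon computation the paper performs. One bookkeeping correction: the factor $2$ in $m(\ell)$ comes from $|B(c)|=2\,|B(c)/\varphi|$ (each geodesic in the quotient represents two elements of $B(c)$), not from the passage to the quotient affecting the angle; the half-angle $\sin(\theta_\ell/2)$ arises directly from the trigonometry of the pentagon formed by two neighboring geodesics, the orthogonal between them, and the horocyclic subarc.
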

\begin{proof}
The situation is as in the following figure which locally represents the elements of $B(c)$ under the quotient by $\varphi$. Note that every element in the quotient by $\varphi$ represents two elements from $B(c)$. 
\begin{figure}[H]
\begin{center}
\includegraphics{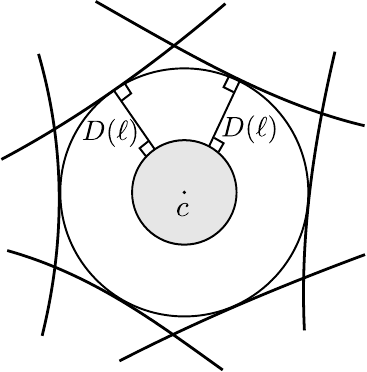}
\caption{Geodesics around a horoball}\label{fig:horoball}
\end{center}
\end{figure}
The inner circle (which we'll refer to as the inner horocycle) represents the quotient horoball of area $1$ and the external one is the horocycle at distance $D(\ell)$ from the horoball of area $1$. By looking at the unique orthogonal geodesics between elements of $B(c)/\varphi$ and the inner horocyle, we can determine a cyclic ordering on the elements of $B(c)/\varphi$. Two neighboring geodesics with respect to this ordering, determine a {\it subarc} on the inner horocycle as follows. We consider the orthogonal geodesic between them and the inner horocycle and take the subarc of the horocyle which forms a pentagon with the two geodesics and the orthogonal (see Figure \ref{fig:horoball}). By a direct computation, using the lower bound on the angle of intersection, this subarc on the inner horocycle is of length at least
$$\frac{\sinh(\ell/2)}{\cosh(\ell/2)}\sin(\theta_\ell/2).$$
These subarcs are all disjoint and are in the same number as the elements of $B(c)/\varphi$ (keep in mind that {\it any} two elements of $B(c)/\varphi$ intersect).

From this we deduce an upper bound on $|B(c)/\varphi|$: 
$$\frac{1}{\frac{\sinh(\ell/2)}{\cosh(\ell/2)}\sin(\theta_\ell/2)}.$$
Now $2\; |B(c)/\varphi| = |B(c)|$, which completes the proof.
\end{proof}
As a consequence, we obtain an upper bound on $|B(S)|$.
\begin{prop}\label{boundB}
If $S\in \ms_{g,n}$, $(g,n)\neq (1,1)$, has systole of length $\sys(S)=\ell$, then
$$|B(S)|\leq n\, m(\ell).$$
\end{prop}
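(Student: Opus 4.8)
The plan is to reduce the global count to the per-cusp count established in the preceding lemma, via a simple covering argument over the $n$ cusps. First I would note that, directly from the definition of $B(S)$, each systole $\alpha\in B(S)$ forms a pair of pants with some cusp $c$ and some $\beta\in\mathfrak{S}(S)\setminus A(S)$; since this $\beta$ bounds $c$ together with $\alpha$ and also lies in $\mathfrak{S}(S)\setminus A(S)$, both $\alpha$ and $\beta$ belong to $B(c)$. Consequently every element of $B(S)$ lies in at least one of the sets $B(c)$, so that $B(S)=\bigcup_{c}B(c)$, where the union runs over the $n$ cusps of $S$.

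The bound then follows by subadditivity of cardinality under unions:
$$|B(S)|=\Big|\bigcup_{c}B(c)\Big|\leq\sum_{c}|B(c)|\leq n\,m(\ell),$$
where I invoke the preceding lemma (valid since $(g,n)\neq(1,1)$) to bound each $|B(c)|$ by $m(\ell)$, and use that there are exactly $n$ cusps.

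The only point requiring any care is that the cover $\{B(c)\}$ need not be a partition: a single systole $\alpha$ may bound two distinct cusps, with two distinct partners, and thus lie in several $B(c)$'s at once. This is precisely why I would use the union bound rather than a disjoint-union count; the potential overcounting only inflates the right-hand side and is therefore harmless for an upper bound. I do not expect a genuine obstacle at this stage, since the substantive geometry—the angle estimate of Lemma \ref{boundangle} and the horocyclic packing argument around a single cusp—has already been carried out in proving $|B(c)|\leq m(\ell)$; the present proposition is essentially the summation of that estimate over all cusps.
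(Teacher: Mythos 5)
Your proof is correct and follows essentially the same route as the paper's: decompose $B(S)$ as the union $\bigcup_{c}B(c)$ over the $n$ cusps, then apply the union bound together with the per-cusp estimate $|B(c)|\leq m(\ell)$ from the preceding lemma. Your added remark about the cover not being a partition is a fair point of care that the paper leaves implicit, but it changes nothing substantive.
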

\begin{proof}
We have $$B(S)=\bigcup_{c \mbox{ \scriptsize cusp}}B(c)$$ and for every cusp $c$ $$|B(c)|\leq m(\ell).$$
\end{proof}
\subsection{Bound on $|C(S)|$}
By definition, elements of $C(S)$ are systoles that satisfy
\begin{itemize}
\item two curves in $C(S)$ intersect at most once and
\item two disjoint curves in $C(S)$ do not bound a cusp.
\end{itemize}
We follow a similar argument to one found in \cite{parlier} to obtain an upper bound on $|C(S)|$. In particular we will need a  collar lemma for systoles.

\begin{lemma} Let $\sys(S)=\ell$ and consider $\alpha$, $\beta\in C(S)$. If $\alpha$ and $\beta$ do not intersect, then they are at distance at least $2r(\ell)$, where
$$r(\ell)=\arcsinh\left(\frac{1}{2\sinh(\ell/4)}\right).$$
\end{lemma}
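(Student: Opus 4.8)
The plan is to establish a collar-type lower bound on the distance between two disjoint systoles $\alpha,\beta\in C(S)$ by showing that each systole carries an embedded half-collar of a definite width, and that these half-collars cannot overlap. The target width $r(\ell)=\arcsinh\!\left(\frac{1}{2\sinh(\ell/4)}\right)$ is exactly the value that appears in the standard collar lemma for a simple closed geodesic of length $\ell$ in a hyperbolic surface, so the content of the lemma is really that the full embedded collar of width $r(\ell)$ around each of $\alpha$ and $\beta$ is actually disjoint from the other curve, forcing a separation of $2r(\ell)$ between the two geodesics.

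First I would recall the collar lemma (as in \cite[Chapter 4]{buser}): around any simple closed geodesic $\gamma$ of length $\ell$ there is an embedded tubular neighborhood (collar) of half-width $w(\ell)=\arcsinh\!\left(\frac{1}{\sinh(\ell/2)}\right)$; here, because we only want a half-collar reaching out to distance $r(\ell)$ measured from $\gamma$, the relevant quantity is tuned to the systole length, and one checks $\arcsinh\!\left(\frac{1}{2\sinh(\ell/4)}\right)$ is the correct half-width for the embedded collar when the curve has length $\ell=\sys(S)$. So the first step is to produce, for each of $\alpha$ and $\beta$, an embedded collar $\mathcal{C}_\alpha$, $\mathcal{C}_\beta$ of half-width $r(\ell)$. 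Embeddedness of the collar itself follows from $\alpha$ and $\beta$ being systoles: any failure of embeddedness of a collar of this width would produce a closed curve, or an arc returning to the collar, shorter than the systole, a contradiction.

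The heart of the argument is then to show that $\mathcal{C}_\alpha$ and $\mathcal{C}_\beta$ are disjoint. Suppose they were not; then a point of $\alpha$ and a point of $\beta$ would be joined by a path of length strictly less than $2r(\ell)$. Concatenating this short path with suitable arcs of $\alpha$ and $\beta$ and using the defining properties of $C(S)$ — namely that disjoint curves in $C(S)$ neither intersect nor bound a cusp — I would produce a closed essential curve shorter than $\sys(S)$, or else an essential arc/curve contradicting one of the two $C(S)$ conditions. The key point is that, because $\alpha,\beta\in C(S)$, the short connecting geodesic together with the two systoles cannot enclose a cusp (this is where the hypothesis $\alpha,\beta\notin A(S)\cup B(S)$ is used), so the resulting curve must be essential, giving the contradiction with minimality of the systole length. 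This is the step I expect to be the main obstacle, since one must carefully rule out all degenerate homotopy types of the curve built from the short path, and in particular verify that the relevant curve is both non-trivial and not cusp-bounding; the definitions of $B(S)$ and $C(S)$ are precisely engineered so that the cusp-bounding case is excluded. Once disjointness of the collars is established, the distance between $\alpha$ and $\beta$ is at least the sum of the two half-widths, i.e.\ at least $2r(\ell)$, which is the claimed bound.
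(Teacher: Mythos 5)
Your high-level framing is right in one respect: the only structural input is that two disjoint curves of $C(S)$ cannot bound a cusp, so the third boundary of any pair of pants they cobound is an essential geodesic of length at least $\ell$. But the quantitative core of your argument has a genuine gap, in two places. First, $r(\ell)=\arcsinh\bigl(\tfrac{1}{2\sinh(\ell/4)}\bigr)$ is \emph{not} the half-width furnished by the standard collar lemma; that lemma gives $\arcsinh\bigl(\tfrac{1}{\sinh(\ell/2)}\bigr)$, and since $\sinh(\ell/2)=2\sinh(\ell/4)\cosh(\ell/4)>2\sinh(\ell/4)$ this is strictly \emph{smaller} than $r(\ell)$. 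So "one checks this is the correct half-width" is precisely the statement to be proved, not something you can import from \cite[Chapter 4]{buser}, and asserting that each systole carries an embedded collar of half-width $r(\ell)$ begs the question. Second, the mechanism you propose for the contradiction --- concatenating a connecting path of length $<2r(\ell)$ with arcs of $\alpha$ and $\beta$ to get an essential curve shorter than $\sys(S)$ --- cannot work: any closed curve built from an arc of $\alpha$, an arc of $\beta$, and two copies of the connecting path has length at least $2\cdot$(something positive) plus arcs of the systoles, and the natural candidates have length at least $\ell$ plus twice the path length, which is never below $\ell$. No cut-and-paste length count produces the constant $r(\ell)$.

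What actually proves the lemma (and is the paper's one-line argument) is hyperbolic trigonometry in a pair of pants. Take a shortest geodesic arc from $\alpha$ to $\beta$; a neighborhood of $\alpha\cup\beta$ together with this arc is a pair of pants whose third boundary $\gamma$ is non-trivial and, by the $C(S)$ condition, not a loop around a cusp, hence a geodesic with $\ell(\gamma)\geq\ell$. The perpendicular distance between the two boundary geodesics of a pair of pants with boundary lengths $\ell,\ell,\ell(\gamma)$ satisfies
$$\cosh\bigl(d(\alpha,\beta)\bigr)=\frac{\cosh(\ell(\gamma)/2)+\cosh^2(\ell/2)}{\sinh^2(\ell/2)}\geq\frac{\cosh(\ell/2)+\cosh^2(\ell/2)}{\sinh^2(\ell/2)}=\frac{\cosh(\ell/2)}{\cosh(\ell/2)-1}=\cosh\bigl(2r(\ell)\bigr),$$
where the last equality is a direct check using $\cosh(2r)=1+2\sinh^2(r)$ and $2\sinh^2(\ell/4)=\cosh(\ell/2)-1$. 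This is monotone in $\ell(\gamma)$, which is exactly where the ``no cusp'' hypothesis enters: a cusp in place of $\gamma$ would replace $\cosh(\ell(\gamma)/2)$ by $1$ and give a strictly smaller distance. Your proposal correctly identifies where the $C(S)$ hypothesis is used, but without this computation the bound $2r(\ell)$ is not reached.
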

\begin{proof}
Fix a pair of pants with $\alpha$ and $\beta$ as boundary and consider the  third boundary component $\gamma$. Since $\alpha$ and $\beta$ are in $C(S)$, they do not bound a cusp, so $\gamma$ is a simple closed geodesic of length at least $\ell$. The result follows by a standard trigonometric computation.
\end{proof}
As a consequence, if $\alpha$ and $\beta$ in $C(S)$ pass through the same disk of radius $r(\ell)$ then they intersect.

Moreover, we have seen in Lemma \ref{boundangle} that there is a lower bound on the angle of intersection of systoles intersecting once. With this in hand we prove the following.
\begin{lemma}
If $(g,n)\neq (1,1)$, $\sys(S)=\ell$ and $\alpha$ and $\beta$ in $C(S)$ pass through a disk of center $p$ and radius $r(\ell)$, the distance between $p$ and the point $q$ of intersection between $\alpha$ and $\beta$ satisfies
$$d(p,q)\leq R(\ell),$$
where
$$
\sinh(R(\ell))=\left\{\begin{array}{ll}
\frac{5}{8\sinh(\ell/4)},& \ell<2\arccosh(3/2)\\
\frac{\cosh(\ell/2)+1}{2\sinh(\ell/4)\sqrt{2\cosh(\ell/2)+1}},&\ell\geq 2\arccosh(3/2).
\end{array}\right.$$
\end{lemma}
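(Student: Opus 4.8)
The plan is to localize the intersection point $q$ by combining two ingredients: each of $\alpha,\beta$ comes within $r(\ell)$ of the center $p$ (since both pass through the disk of radius $r(\ell)$), and the two systoles meet at $q$ at a definite angle, bounded below by $\theta_\ell$ via Lemma \ref{boundangle}. Note first that $q$ is well defined and unique: two curves in $C(S)$ meet at most once, and by the preceding observation two curves of $C(S)$ passing through a common disk of radius $r(\ell)$ must in fact intersect, so there is exactly one such $q$.

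First I would drop the perpendiculars from $p$ to the geodesics $\alpha$ and $\beta$, with feet $a$ and $b$. Since $\alpha$ and $\beta$ meet the disk, $d(p,a)=d(p,\alpha)\le r(\ell)$ and $d(p,b)=d(p,\beta)\le r(\ell)$. The segment $pq$ together with $a$ (respectively $b$) forms a triangle with a right angle at $a$ (respectively $b$), and the standard right-triangle relation gives $\sin\angle(pq,\alpha)=\sinh d(p,a)/\sinh d(p,q)$ and $\sin\angle(pq,\beta)=\sinh d(p,b)/\sinh d(p,q)$. Thus bounding $d(p,q)$ from above reduces to bounding from below the larger of the two angles that the segment $pq$ makes with $\alpha$ and with $\beta$, since $\sinh d(p,q)=\sinh d(p,\alpha)/\sin\angle(pq,\alpha)\le \sinh r(\ell)/\sin\angle(pq,\alpha)$, and similarly for $\beta$.

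The key step is to feed in the angle lower bound. The two geodesics $\alpha,\beta$ issuing from $q$ meet at an angle at least $\theta_\ell$ (Lemma \ref{boundangle}), while the direction $qp$ sits in a definite position relative to them, so the angle between $pq$ and at least one of the two geodesics is controlled from below in terms of $\theta_\ell$. Using this lower bound for the relevant $\sin\angle(pq,\cdot)$ then yields a bound of the form $\sinh d(p,q)\le \sinh r(\ell)/\sin\theta_\ell$. Substituting $\sinh r(\ell)=1/(2\sinh(\ell/4))$ together with the two expressions for $\sin\theta_\ell$ from Lemma \ref{boundangle}, and simplifying using $\cosh(\ell/2)+1=2\cosh^2(\ell/4)$ and $2\cosh(\ell/2)+1=4\cosh^2(\ell/4)-1$, produces the stated two-case formula for $\sinh R(\ell)$, the split into regimes being inherited directly from the case split for $\theta_\ell$.

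I expect the main obstacle to be precisely this middle step: converting the lower bound $\theta_\ell$ on the \emph{intersection} angle at $q$ into a usable lower bound on the angle that the single segment $pq$ makes with one of the curves. This requires care about the configuration, namely whether $qp$ falls into the acute or the obtuse wedge determined by $\alpha$ and $\beta$ at $q$, and it is here that one must use that \emph{both} curves (not just one) reach the disk, so that the extremal configuration is pinned down and the correct multiple of $\theta_\ell$ enters. Once the angle is secured, the remaining trigonometric simplification and the passage through the two regimes of $\theta_\ell$ are routine.
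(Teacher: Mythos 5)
Your proposal follows essentially the same route as the paper's proof: the paper also forms the two triangles with vertices $p$, $q$ and a point of $\alpha$ (resp.\ $\beta$) inside the disk, notes that the two angles at $q$ sum to $\angle(\alpha,\beta)$ so that one of them is at least $\angle(\alpha,\beta)/2$, and applies the hyperbolic law of sines together with $d(p,\cdot)\leq r(\ell)$ and Lemma \ref{boundangle} (your use of perpendicular feet is just the special case $\eta=\pi/2$ of the paper's triangle). The one step you flag as delicate --- passing from the lower bound on the full intersection angle to a lower bound on the angle that $pq$ makes with one of the curves --- is handled in the paper exactly by the ``at least half the angle'' observation, and the remaining constant-chasing is left at the same level of detail there as in your sketch.
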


Note that $R(\ell)$ is bounded for $\ell\geq 2\arcsinh(1)$.

\begin{proof}
The proof is analogous to the proof of \cite[Lemma 2.6]{parlier}. Fix $p_\alpha\in \alpha$ and $p_\beta\in \beta$ lying in $D_{r(\ell)}(p)$. We have two triangles of vertices $p, p_\alpha, q$ and $p, p_\beta, q$, and the sum of the two angles $\theta_\alpha$ and $\theta_\beta$ at $q$ is the angle of intersection $\angle(\alpha,\beta)$. Suppose $\theta_{\alpha}\geq \frac{\angle(\alpha,\beta)}{2}$ and consider the angle $\eta$ of the triangle $p, p_\alpha, q$ at $p_\alpha$.
\begin{figure}[H]
\begin{center}
\includegraphics{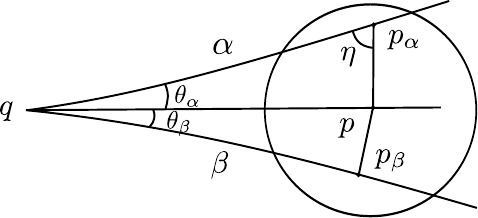}
\caption{$\alpha$ and $\beta$ passing though a disk of radius $r(\ell)$}
\end{center}
\end{figure}
Then
$$\frac{\sin(\eta)}{\sinh(d(p,q))}=\frac{\sin(\theta_\alpha)}{\sinh(d(p,p_\alpha))}.$$
Using $\theta_{\alpha}\geq \frac{\angle(\alpha,\beta)}{2}$, $d(p,p_\alpha)<r(\ell)$ and Lemma \ref{boundangle}, we obtain the claimed result.
\end{proof}

We are now in a position to obtain a bound on $|C(S)|$.

\begin{prop}\label{boundC}
If $S\in \ms_{g,n}$, $g\neq 0$ and $(g,n)\neq (1,1)$, has systole of length $\sys(S)=\ell$, then
$$|C(S)|\leq 200\,\frac{e^{\ell/2}}{\ell}(2g-2+n).$$
\end{prop}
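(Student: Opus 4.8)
The plan is to bound $|C(S)|$ by a double counting argument between the systoles in $C(S)$ and a covering of $S$ by small disks, in the spirit of \cite{parlier}. Write $r=r(\ell)$, $R=R(\ell)$. Fix a maximal collection of points $p_1,\dots,p_N$ that are pairwise at distance at least $r$. By maximality the disks $D_r(p_i)$ cover $S$, while the disks $D_{r/2}(p_i)$ are pairwise disjoint; since each of the latter has area $2\pi(\cosh(r/2)-1)=4\pi\sinh^2(r/4)$, comparing with $\area(S)=2\pi(2g-2+n)$ gives
$$N\leq \frac{2g-2+n}{2\sinh^2(r/4)}.$$
I would then estimate the number of incidences $I=\#\{(\alpha,i):\alpha\cap D_r(p_i)\neq\emptyset\}$ from two sides.

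From the side of the systoles: a systole $\alpha\in C(S)$ has length $\ell$ and is covered by the disks it meets; as each such disk has diameter less than $2r$, at least $\ell/(2r)$ of them are needed, so every $\alpha$ contributes at least $\ell/(2r)$ to $I$ and hence $I\geq |C(S)|\cdot \ell/(2r)$. From the side of the disks, the crucial point is to bound, for each fixed $i$, the number $M$ of systoles of $C(S)$ meeting $D_r(p_i)$. By the collar lemma for systoles proved above, any two such systoles must intersect, and being elements of $C(S)$ they intersect exactly once; by the previous lemma their intersection point lies in $D_R(p_i)$, and by Lemma \ref{boundangle} they cross at angle at least $\theta_\ell$. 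Lifting to $\Hyp$, all these arcs and all their pairwise intersection points lie in a single lift of $D_R(p_i)$, so they appear as complete geodesics passing through the tiny disk $D_r(\tilde p_i)$ and crossing pairwise at angle $\geq\theta_\ell$ inside $D_R(\tilde p_i)$. Ordering them by direction, consecutive ones differ by at least $\theta_\ell$, which forces a bound of the form $M\leq C_2/\sin\theta_\ell$ for an explicit constant $C_2$; thus $I\leq N\cdot M$.

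Combining the two estimates yields
$$|C(S)|\leq \frac{2r}{\ell}\,N\,M\leq \frac{2r}{\ell}\cdot\frac{2g-2+n}{2\sinh^2(r/4)}\cdot\frac{C_2}{\sin\theta_\ell}.$$
It then remains to insert the definitions $\sinh r=\tfrac{1}{2\sinh(\ell/4)}$ and the value of $\sin\theta_\ell$ from Lemma \ref{boundangle} and to simplify. Since $r\sim e^{-\ell/4}$ and $\sin\theta_\ell\sim 2e^{-\ell/4}$ as $\ell\to\infty$, the product $r\sin\theta_\ell$ is of order $e^{-\ell/2}$ while $\sinh^2(r/4)\sim r^2/16$ is also of order $e^{-\ell/2}$, so the whole expression is of order $\tfrac{e^{\ell/2}}{\ell}(2g-2+n)$; a careful accounting of the constants, carried out uniformly for $\ell\geq 2\arcsinh(1)$ (where $R(\ell)$ is bounded and the systole bound of Theorem \ref{sysbound} applies), gives the stated factor $200$.

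The step I expect to be the main obstacle is the per-disk bound $M\leq C_2/\sin\theta_\ell$: turning the qualitative facts ``pairwise intersecting, at angle $\geq\theta_\ell$, within $D_R$'' into a clean count requires controlling, in the universal cover, how the small radius $r$ and the bounded radius $R$ interact with the angular separation, and it is precisely here that one must be careful enough to keep the final constant below $200$. The length lower bound $\ell/(2r)$ for the number of disks a systole meets is the other point needing a short justification, in order to rule out a systole meeting a single disk in many arcs; this is where one uses that $r$ is small compared with $\sys(S)$.
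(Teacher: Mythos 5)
Your argument is essentially the paper's proof: your $N$, $M$ and $\ell/(2r)$ are exactly the quantities the paper calls $F(S)$, $G(S)$ and $H(S)$, estimated by the same packing, angle-separation and length arguments, and combined in the same way. Two points need repair, though. First, you cannot cover all of $S$ this way: near a cusp a maximal $r$-separated set is infinite (points going up the cusp stay pairwise far apart), and the $r/2$-balls centred deep in a cusp are not embedded, so the area comparison giving your bound on $N$ breaks down. The paper avoids this by covering only $\tilde S=S\setminus\bigcup_c D_{w(\ell)}(c)$, the complement of suitable horoball neighbourhoods, which suffices because the collar lemma forces every curve of $C(S)$ to lie in $\tilde S$; you should do the same. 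Second, your constant bookkeeping is restricted to $\ell\geq 2\arcsinh(1)$, and indeed the lower bound on $H$ degenerates as $\ell\to 0$; the complementary case must be treated separately, which is easy since for $\ell\leq 2\arcsinh(1)$ all systoles are pairwise disjoint and hence $|C(S)|\leq 3g-3+n$. With these two fixes your proof coincides with the one in the paper.
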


\begin{proof}
If $\ell\leq 2\arcsinh(1)$, then all systoles are pairwise disjoint, so
$$|C(S)|\leq \kiss(S)\leq 3g-3+n.$$

We now suppose that $\ell>2\arcsinh(1)$. Consider $\tilde{S}=S\setminus \bigcup_{c \mbox{  \scriptsize cusp}}D_{w(\ell)}(c)$, where $$w(\ell)=\arcsinh\left(\frac{1}{\sinh(\ell/2)}\right)$$ is the width of a collar around a systole. By the  collar lemma, each curve of $C(S)$ is contained in $\tilde{S}$. We cover $\tilde{S}$ with disks of radius $r(\ell)$. Then the cardinality of $C(S)$ is bounded above by $$\frac{F(S)G(S)}{H(S)},$$
where
\begin{align*}
F(S)&=\#\{\mbox{balls of radius $r(\ell)$ needed to cover }  \tilde{S}\}\\
G(S)&=\#\{\mbox{curves in $C(S)$ crossing a ball of radius }r(\ell)\}\\
H(S)&=\#\{\mbox{number of balls of radius $r(\ell)$ a curve in $C(S)$ must cross}\}
\end{align*}
To bound $|C(S)|$, we need to give upper bounds for $F(S)$ and $G(S)$ and a lower bound for $H(S)$.

\underline{Upper bound for $F(S)$}

We have
\begin{gather*}
F(S)\leq \max \#\{\mbox{embedded balls of radius $r(\ell)/2$ which are pairwise disjoint}\}\leq \\
\leq \frac{\area(\tilde{S})}{\area(\mbox{\small ball of radius }r(\ell)/2)}\leq \frac{\area(S)}{2\pi (\cosh(r(\ell)/2)-1)}\leq 8(2g-2+n)e^{\ell/2}.
\end{gather*}

\underline{Upper bound for $G(S)$}

We proceed as in the proof of Theorem 2.9 in \cite{parlier}, by reasoning in the universal cover and estimating how many geodesics, pairwise intersecting at an angle of at least $\theta_\ell$, can intersect a disk of radius $r(\ell)$. We obtain
$$G(S)\leq \frac{\pi}{2}\frac{\sinh(R(\ell)+\arcsinh(1))}{\arcsinh(\sin(\theta_\ell))}\leq \frac{5\pi}{2\arcsinh(\sin(\theta_\ell))}.$$

\underline{Lower bound for $H(S)$}

To cover a curve of length $\ell$ with disks of radius $r(\ell)$ we need at least $\frac{\ell}{2r(\ell)}$. So
$$H(S)\geq \frac{\ell}{2\arcsinh\left(\frac{1}{2\sinh(\ell/4)}\right)}\geq \ell\sinh(\ell/4).$$

By putting the three bounds together and considering that $\sinh(\ell/4)\arcsinh(\sin(\theta_\ell))$ is bounded below by $1/3$ for $\ell>2\arcsinh(1)$ we obtain the claimed result.
\end{proof}

\subsection{Proof of main results}

Using Propositions \ref{boundA}, \ref{boundB} and \ref{boundC}, we get an upper bound for the kissing number of a surface in terms of its signature and of its systole length.

\begin{theor}\label{kissbound}
If $S\in \ms_{g,n}$ ($g\geq 1$, $(g,n)\neq (1,1)$) has systole of length $\sys(S)=\ell$, then
$$\kiss(S)\leq 20\, n\cosh(\ell/4)+200\,\frac{e^{\ell/2}}{\ell}(2g-2+n).$$
\end{theor}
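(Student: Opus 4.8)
The plan is to assemble the desired inequality directly from the three counting bounds already proved, exploiting that $\mathfrak{S}(S)$ is the \emph{disjoint} union $A(S)\sqcup B(S)\sqcup C(S)$, so that
$$\kiss(S)=|A(S)|+|B(S)|+|C(S)|.$$
The term $|C(S)|$ is handled verbatim by Proposition \ref{boundC}, which already provides the second summand $200\,\frac{e^{\ell/2}}{\ell}(2g-2+n)$ (and whose bound is valid for every $\ell$, since $e^{\ell/2}/\ell\geq e/2$ dominates the crude count $3g-3+n$ used for short systoles). For $|A(S)|$, Proposition \ref{boundA} gives $|A(S)|\leq\frac{n}{2}\lfloor 2\cosh(\ell/4)\rfloor\leq n\cosh(\ell/4)$, already of the target shape. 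Thus the whole problem reduces to bounding $|B(S)|$ by a constant multiple of $n\cosh(\ell/4)$, so that the three contributions collect into $20\,n\cosh(\ell/4)+200\,\frac{e^{\ell/2}}{\ell}(2g-2+n)$.

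To control $|B(S)|$ I would split on the systole length. For $\ell\geq 2\arcsinh(1)$, Proposition \ref{boundB} gives $|B(S)|\leq n\,m(\ell)$ with $m(\ell)=\coth(\ell/2)\,\frac{2}{\sin(\theta_\ell/2)}$, and the task becomes showing $m(\ell)\leq 19\cosh(\ell/4)$. On this range $\coth(\ell/2)\leq\sqrt{2}$, and using $\sin(\theta_\ell/2)\geq\frac12\sin\theta_\ell$ together with the explicit $\theta_\ell$ from Lemma \ref{boundangle}: writing $x=\cosh(\ell/4)$ so that $\cosh(\ell/2)=2x^2-1$, one gets $\sin\theta_\ell=\sqrt{4x^2-1}/(2x^2)$, whence $1/\sin\theta_\ell\leq\frac{2}{\sqrt3}\cosh(\ell/4)$ from $\sqrt{4x^2-1}\geq\sqrt3\,x$. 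This yields $m(\ell)\leq 8\sqrt{2/3}\,\cosh(\ell/4)$, comfortably below $19\cosh(\ell/4)$; the short interval where $\sin\theta_\ell=2/\sqrt5$ is checked identically and produces an even smaller constant.

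The one genuinely delicate point, and the main obstacle, is the complementary regime $\ell<2\arcsinh(1)$, where $m(\ell)$ blows up like $\coth(\ell/2)\to\infty$ and Proposition \ref{boundB} is useless for our purposes. Here I would instead invoke the collar lemma: when $\sys(S)\leq 2\arcsinh(1)$ all systoles are pairwise disjoint, so for each cusp $c$ no two distinct pairs of systoles can bound $c$ (a second pair would have to cross the pair of pants of the first and hence intersect one of its boundary curves, contradicting disjointness), forcing $|B(c)|\leq 2$ and thus $|B(S)|\leq 2n\leq 2n\cosh(\ell/4)$. Combining the two regimes gives $|B(S)|\leq 19\,n\cosh(\ell/4)$ throughout, so $|A(S)|+|B(S)|\leq 20\,n\cosh(\ell/4)$, and adding $|C(S)|$ completes the proof. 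The crux is therefore not the final assembly but the uniform control of $m(\ell)$: recognizing that the short-systole blow-up must be treated separately through disjointness, and verifying that on the complementary range $m(\ell)/\cosh(\ell/4)$ stays bounded by an explicit constant.
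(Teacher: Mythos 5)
Your proposal is correct and takes the same route as the paper, whose entire proof is the one-line assembly of Propositions \ref{boundA}, \ref{boundB} and \ref{boundC} over the disjoint union $\mathfrak{S}(S)=A(S)\sqcup B(S)\sqcup C(S)$. You additionally supply the verification that $m(\ell)\leq 19\cosh(\ell/4)$ for $\ell\geq 2\arcsinh(1)$ and, importantly, the separate disjointness argument giving $|B(c)|\leq 2$ when $\ell<2\arcsinh(1)$ (where $m(\ell)$ blows up and Proposition \ref{boundB} alone is insufficient) --- a point the paper glosses over but which your computation handles correctly.
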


As a consequence, we can get a bound on the kissing number which is independent on the systole length.

\begin{theor}\label{kissbound2}
There exists a universal constant $C$ (which we can take to be $2\times 10^4$) such that for any $S\in \ms_{g,n}$, $g\geq 1$, its kissing number satisfies
$$\kiss(S)\leq C\,(g+n)\frac{g}{\log(g+1)}.$$
\end{theor}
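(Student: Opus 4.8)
The plan is to combine the systole-length bound from Theorem \ref{sysbound} with the length-dependent kissing number bound from Theorem \ref{kissbound}, eliminating the dependence on $\ell=\sys(S)$ and expressing everything in terms of the signature $(g,n)$ alone. First I would handle the generic case $g\geq 2$, or $g=1$ with $n\geq 2$, so that Theorem \ref{kissbound} applies. The key observation is that Theorem \ref{sysbound} gives $\ell\leq 2\log(g)+K$ with $K<8$, so that the two exponential quantities appearing in Theorem \ref{kissbound} can be controlled: $\cosh(\ell/4)\leq \tfrac{1}{2}e^{\ell/4}+\tfrac{1}{2}e^{-\ell/4}$ is of order $e^{\ell/4}\leq e^{K/4}\sqrt{g}$, and more importantly $e^{\ell/2}\leq e^{K/2}g$, which is order $g$. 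Substituting these into the bound $\kiss(S)\leq 20\,n\cosh(\ell/4)+200\,\frac{e^{\ell/2}}{\ell}(2g-2+n)$ will produce the two terms we need.

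The second, and more delicate, step is to extract the $\frac{1}{\log(g+1)}$ factor. The quantity $\frac{e^{\ell/2}}{\ell}$ is increasing in $\ell$ for $\ell$ large, so I would bound it by its value at $\ell=2\log(g)+K$, giving $\frac{e^{\ell/2}}{\ell}\leq \frac{e^{K/2}\,g}{2\log(g)+K}$. Since $\frac{1}{2\log(g)+K}$ is comparable to $\frac{1}{\log(g+1)}$ up to a universal multiplicative constant (for all $g\geq 1$, where one must be slightly careful near $g=1$ since $\log 1=0$—this is precisely why the statement uses $\log(g+1)$ rather than $\log g$), the second term becomes of order $\frac{g}{\log(g+1)}(2g-2+n)$, i.e.\ order $(g+n)\frac{g}{\log(g+1)}$. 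The first term $20\,n\cosh(\ell/4)$ is of order $n\sqrt{g}$, which is dominated by $(g+n)\frac{g}{\log(g+1)}$ and so gets absorbed into the final constant. Assembling these estimates and tracking the numerical constants (using $K<8$, so $e^{K/2}<e^4<55$ and $e^{K/4}<e^2<8$) will yield an explicit universal $C$; verifying $C=2\times 10^4$ suffices is a routine but tedious arithmetic check.

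The remaining task is to dispose of the excluded cases. For $g=1,n=1$ (the once-punctured torus), which Theorem \ref{kissbound} does not cover, I would invoke the general bound $\kiss(S)\leq 3g-3+n$ when systoles are pairwise disjoint, or handle this low-complexity case directly, noting the right-hand side $C\,(g+n)\frac{g}{\log(g+1)}=2C/\log 2$ is a fixed positive constant and the kissing number of a once-punctured torus is bounded by an absolute constant. For $g=1$ with $n\geq 2$ and all $g\geq 2$, the main argument applies verbatim.

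The main obstacle I anticipate is the bookkeeping near small values of $g$: the function $\frac{1}{2\log(g)+K}$ must be compared to $\frac{1}{\log(g+1)}$ uniformly over all $g\geq 1$, and at $g=1$ the numerator $\log g$ vanishes while $2\log(g)+K=K$ stays bounded away from zero, so the comparison $\frac{1}{2\log g + K}\leq \frac{C'}{\log(g+1)}$ requires choosing the universal constant $C'$ to accommodate the ratio $\frac{\log(g+1)}{2\log(g)+K}$, which is maximized over the relevant range at a finite point. Confirming this ratio is bounded, and that the chosen $C=2\times 10^4$ absorbs both the first (order $n\sqrt g$) term and all accumulated constants, is where the care lies; the asymptotic shape of the bound, by contrast, follows immediately from the substitution.
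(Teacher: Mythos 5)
Your overall strategy is the paper's: feed the systole-length bounds into Theorem \ref{kissbound} and dispose of $(g,n)=(1,1)$ separately via an absolute bound on the kissing number of a one-holed torus. The only cosmetic difference is in the first term: the paper substitutes Schmutz Schaller's bound (Theorem \ref{schmutzsys}) into $\cosh(\ell/4)$, which makes $20\,n\cosh(\ell/4)$ linear in $2g-2+n$, whereas you use Theorem \ref{sysbound} there as well and get a term of order $n\sqrt{g}$; both are absorbed by $(g+n)\frac{g}{\log(g+1)}$, so this choice is immaterial.

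There is, however, one step that fails as written: you bound $e^{\ell/2}/\ell$ by its value at $\ell=2\log(g)+K$ on the grounds that the function is ``increasing for $\ell$ large''. The function $\ell\mapsto e^{\ell/2}/\ell$ is decreasing on $(0,2)$ and blows up as $\ell\to 0^{+}$, and there is no universal lower bound on $\sys(S)$: for a surface with small genus, many cusps and a very short systole, the quantity $200\,\frac{e^{\ell/2}}{\ell}(2g-2+n)$ exceeds anything obtained by evaluating at $\ell=2\log(g)+K$, so the substitution is not a valid chain of inequalities. The fix is the dichotomy already used in the proof of Proposition \ref{boundC}: if $\ell\leq 2\arcsinh(1)$, the collar lemma forces any two systoles to be disjoint, hence $\kiss(S)\leq 3g-3+n$, which is trivially at most $C\,(g+n)\frac{g}{\log(g+1)}$; if $\ell\geq 2\arcsinh(1)$, then since $e^{\ell/2}/\ell$ is unimodal with minimum at $\ell=2$, its maximum on the interval $[2\arcsinh(1),\,2\log(g)+K]$ is attained at an endpoint and is indeed comparable to $e^{K/2}g/(2\log(g)+K)$. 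With that case distinction added, the rest of your bookkeeping --- the comparison of $2\log(g)+K$ with $\log(g+1)$, the absorption of the $n\sqrt{g}$ term, and the $(1,1)$ case --- goes through.
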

\begin{proof}
It follows from the bounds in Theorem \ref{kissbound} and bounds on systole lengths. Precisely we insert the Schmutz Schaller bound (Theorem \ref{schmutzsys}) in the term $\cosh(\ell/4)$ and we use Theorem \ref{sysbound} for the $e^{\ell/2}/\ell$ term. For $(g,n)=(1,1)$,  we recall the well known fact that $\kiss(S)\leq 3$ (there can be at most $3$ distinct curves that pairwise intersect at most once on a one-holed torus). 
\end{proof}

\begin{rmk}
In \cite{przytycki}, Przytycki obtained an upper bound for the number of simple closed curves pairwise intersecting at most once. Using this, and our bound on $|A(S)|$ (Proposition \ref{boundA}), one can obtain an upper bound for the kissing number which is cubic in the Euler characteristic. Our upper bound, on the other hand, is sub-quadratic in $|\chi(S)|$, like the one for closed surfaces in \cite{parlier}. 
\end{rmk}

The upper bound of Theorem \ref{kissbound2} is linear in the number of cusps, if we fix the genus. For punctured spheres we can obtain a more meaningful bound.
\begin{theor}
For every $S\in\ms_{0,n}$, the number of systoles satisfies
$$\kiss(S)\leq \frac{7}{2}n-5.$$
\end{theor}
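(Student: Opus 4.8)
The plan is to use the decomposition $\mathfrak{S}(S)=A(S)\sqcup(\mathfrak{S}(S)\setminus A(S))$ and to bound the two pieces separately, exploiting genus $0$ in two ways: Schmutz Schaller's length bound collapses to a universal constant, and systoles outside $A(S)$ are forced to be pairwise disjoint.

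First I would bound $|A(S)|$. For a punctured sphere Theorem \ref{schmutzsys} gives $\sys(S)=\ell\leq 4\arccosh\!\big(\tfrac{3n-6}{n}\big)<4\arccosh(3)$, so $\cosh(\ell/4)<3$ and hence $\lfloor 2\cosh(\ell/4)\rfloor\leq 5$. Feeding this into Proposition \ref{boundA} yields $|A(S)|\leq\tfrac{5}{2}n$. (For small $n$ the Euler characteristic estimate $|A(S)|\leq 3n-6$ of Remark \ref{rmk:eulerA} is sharper, and I would keep it in reserve.) Note that the strict inequality in the Schmutz bound is what produces the floor value $5$ rather than $6$, and this is exactly what makes the coefficient work out.

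The crux is bounding $|\mathfrak{S}(S)\setminus A(S)|$. The key observation is that any two systoles $\alpha,\beta\notin A(S)$ are disjoint: by Proposition \ref{2cusps} they intersect at most once, but two simple closed curves meeting exactly once fill a one-holed torus, which cannot embed in a genus-$0$ surface. Thus $\mathfrak{S}(S)\setminus A(S)$ is a family of $m$ pairwise disjoint, pairwise non-isotopic essential geodesics, each of which (being essential and not bounding two cusps) separates $\geq 3$ cusps from $\geq 3$ cusps. Cutting $S$ along them gives $m+1$ complementary regions whose adjacency graph is a tree. I would then count cusps region by region: a leaf region is one side of a single such curve and so contains $\geq 3$ cusps; a valence-$2$ region cannot be an annulus (the curves are non-isotopic), so it contains $\geq 1$ cusp; and the tree identity $\sum(\mathrm{valence}-2)=-2$ controls the high-valence regions. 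Together with $\sum(\text{cusps per region})=n$ this gives $m\leq n-5$. The delicate point here is that the naive pants-decomposition bound $m\leq n-3$ is too weak by precisely $2$, so the argument must genuinely use that every such curve surrounds at least three cusps on each side, i.e.\ that leaf regions are ``fat''.

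Finally I would add the two bounds. For $n\geq 5$ this gives $\kiss(S)\leq \tfrac{5}{2}n+(n-5)=\tfrac{7}{2}n-5$, while for $n=4$ no curve can surround $\geq 3$ cusps on both sides, so $\mathfrak{S}(S)\setminus A(S)=\varnothing$ and $\kiss(S)=|A(S)|\leq 3n-6=6\leq \tfrac{7}{2}n-5$. I expect the main obstacle to lie in the third step: both spotting the genus-$0$ disjointness and then sharpening the complementary-region count from $n-3$ to $n-5$, since the target coefficient $\tfrac{7}{2}$ leaves no slack whatsoever.
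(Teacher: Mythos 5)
Your proposal is correct and follows essentially the same route as the paper: bound $|A(S)|\leq\tfrac{5}{2}n$ via Proposition \ref{boundA} together with Schmutz Schaller's bound, observe that systoles outside $A(S)$ are pairwise disjoint on a sphere, and show there are at most $n-5$ of them. The only cosmetic differences are that the paper gets disjointness from the parity of intersection with separating curves rather than the one-holed-torus argument, and obtains the $n-5$ count by completing to a pants decomposition whose dual tree has at least two leaves (each leaf curve bounding two cusps) instead of your direct valence count on complementary regions; both give the same bound.
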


\begin{proof}
By Proposition \ref{boundA} and Schmutz Schaller's upper bound for the systole, we have
$$|A(S)|\leq \frac{n}{2}\left\lfloor\frac{2(3n-6)}{n}\right\rfloor=\frac{n}{2}\left\lfloor 6-\frac{12}{n}\right\rfloor\leq\frac{5}{2}n.$$
Moreover, systoles are separating, so can only intersect an even number of times. This implies that systoles in $\mathfrak{S}(S)\setminus A(S)$ are pairwise disjoint and hence part of a pants decomposition. Note that any pants decomposition of a sphere contains at least two curves bounding two cusps: indeed, the dual graph to the pants decomposition is a tree, so it has at least two leaves, which correspond to curves bounding two cusps. This implies that
$$|\mathfrak{S}(S)\setminus A(S)|\leq \#\mbox{curves in a pants decomposition}-2=n-5.$$
\end{proof}

By using short pants decompositions where every curve is of equal length, it is easy to obtain a family of punctured spheres with a number of systoles that grows linearly in the number of cusps. Matching the $\frac{7}{2}n$ upper bound from this theorem seems much more challenging. 

\bibliographystyle{alpha}
\bibliography{references}
\end{document}